\def \Hyp{\mbox{\sl I\kern-.166em H}}   
    \def \Nat{\mbox{\sl I\kern-.166em N}}             
\def \Pr{\mbox{\sl I\kern-.166em P}}
\def \R#1{{\mbox{\sl I\kern-.166em R}}^{#1}}      
\def \Real{\mbox{\sl I\kern-.166em R}}            
\def \Nat{\mbox{\sl I\kern-.166em N}}             
\newtheorem{theorem}{Theorem}[section]
\newtheorem{corollary}[theorem]{Corollary}
\newtheorem{definition}[theorem]{Definition}
\newtheorem{remark}[theorem]{Remark}
\newtheorem{example}[theorem]{Example}
\newcommand{\rid}{\protect\large\bf}
\title{\bf  Geometry of $H$-paracontact metric  manifolds }
\author{ Giovanni Calvaruso and Domenico Perrone
\thanks{Supported by funds of the Universit\'a del Salento and of the M.I.U.R. (within PRIN).
\newline 2000 {\em Mathematics Subject Classification:} {53D10, 53C50, 53C43, 53C25.} 
\newline
{\em Keywords and phrases:}
paracontact metric structures, Reeb vector field, harmonic vector fields, infinitesimal harmonic transformations, Ricci solitons.}
}
\date{}
\begin{document}

\maketitle

\begin{abstract}
{ We introduce and study $H$-paracontact metric manifolds, that is, paracontact metric  manifolds whose Reeb vector field $\xi$ is harmonic. We prove that they are characterized by the condition that $\xi$ is a Ricci eigenvector. We then investigate how harmonicity of the Reeb vector field $\xi$ of a paracontact metric manifold is related to some other relevant geometric properties, like infinitesimal harmonic transformations and paracontact Ricci solitons.}
\end{abstract}


\bigskip\noindent
\section{\rid Introduction}

{ In parallel with contact and complex structures in the Riemannian case, paracontact metric structures were introduced in \cite{KK} in semi-Riemannian settings, as a natural odd-dimensional counterpart to paraHermitian structures. Up to recently, the study of paracontact metric manifolds mainly concerned the special case of paraSasakian manifolds. 

A systemathic study of paracontact metric manifolds started with the paper \cite{Za}, were the Levi-Civita connection, the curvature and a canonical connection (analogue to the Tanaka-Webster connection of the contact metric case) of a paracontact metric manifold have been described.  The technical apparatus introduced in \cite{Za} is essential for further investigations of paracontact metric geometry. Since then, paracontact metric manifolds have been studied under several different points of view. The case when the Reeb vector field satisfies a nullity condition was studied in \cite{CKM}. Conformal paracontact curvature, and its applications, were investigated in \cite{Z2}. In \cite{CIll} the first author studied three-dimensional homogeneous paracontact metric manifolds.

Because of the recent studies of harmonicity conditions in semi-Riemannian geometry, it is a natural problem to investigate when the Reeb vector field of a paracontact metric manifold is a harmonic vector field.
Given a (smooth, oriented, connected) semi-Riemannian manifold $(M,g)$ and a unit vector field $V$ on $M$, the {\em energy} of $V$ is the energy of the corresponding smooth map  $V:(M,g)\rightarrow (T_1M,g^s)$, where $(T_1M,g^s)$ is the unit tangent bundle of $(M,g)$, equipped with the Sasaki metric.  
$V$ is said to be a {\em harmonic vector field} if $V:(M,g)\rightarrow (T_1M,g^s)$ is a critical point for the energy functional restricted to maps defined by unit vector fields. We may refer to the recent monograph  \cite{DrPe} and references therein for an overview on harmonic vector fields. 

The second author \cite{Pe} proved that the Reeb vector field $\xi$ of a contact Riemannian manifold is harmonic if and only if $\xi$ is a Ricci eigenvector. This led to define {\em $H$-contact Riemannian manifolds} as contact metric manifolds, whose Reeb vector field is harmonic.  Since then, $H$-contact Riemannian manifolds have been intensively studied and their relations to other contact geometry properties are now well understood. 

In this paper we introduce the corresponding notion of {\em $H$-paracontact (metric) manifolds}, that is,  paracontact metric manifolds whose Reeb vector field is harmonic. We prove that a paracontact metric manifold is $H$-paracontact if and only if the Reeb vector field is a Ricci eigenvector. This result is not a direct adaptation of its contact Riemannian analogue, because of the deep differences arising between Riemannian and semi-Riemannian settings. In fact, the results proved in \cite{Pe} uses in an essential way the fact that in the Riemannian case, a self-adjoint operator admits an orthonormal basis of eigenvectors, while this property does not hold in semi-Riemannian settings.  

We then investigate the relationship between $H$-paracontact manifolds and some relevant geometric properties, like the Reeb vector field being an infinitesimal harmonic transformation or the paracontact metric structure being a paracontact Ricci soliton.  Under these points of view, the Riemannian case presents some strong rigidity results. However, these results do not hold any more in general semi-Riemannian settings. This makes interesting to study contact semi-Riemannian structures, whose Reeb vector field is an infinitesimal harmonic transformation or determines a Ricci soliton.

{ The paper is organized in the following way. In Section~2 we report some basic information about paracontact metric manifolds and harmonicity properties of vector fields. The characterization of $H$-paracontact metric manifolds in terms of the Ricci operator is proved in Section~3, { where we also prove that the notion of $H$-paracontact manifold is invariant under $D$-homothetic deformations}. In Section~4 we prove that several { classes} of paracontact metric manifolds (paraSasakian and $K$-paracontact manifolds, paracontact $(\kappa,\mu)$-spaces, three-dimensional homogeneous paracontact metric manifolds) are   $H$-paracontact, so showing that the class of $H$-paracontact metric manifolds is rather large. The relationship between $H$-paracontact metric manifolds and paracontact metric manifolds, whose Reeb vector field is $1$-harmonic (equivalently, an infinitesimal harmonic transformation) or whose vector field determines a Ricci soliton, are then investigated in Section~5. Differently from the contact Riemannian case, the class of paracontact metric structures, whose Reeb vector field is an infinitesimal harmonic transformation, is strictly larger than the one of $K$-paracontact structures. 
}

\section{\rid Preliminaries }
\setcounter{equation}{0}

{\small \subsection{Paracontact metric manifolds} }

\medskip\noindent
{ The aim of this  Subsection is to report some basic facts about paracontact metric manifolds. All manifolds are assumed to be connected and smooth. 
We may refer to \cite{KK}, \cite{Za} and references therein for more information about paracontact metric geometry.}

  A $(2n+1)$-dimensional manifold $M$ is said to be a {\it contact manifold} if it admits a global 1-form $\eta$, such that $\eta\wedge (d\eta )^n \not= 0$. Given  such a form $\eta$, there exists a unique vector field $\xi$, called the {\it characteristic vector field} or the {\it  Reeb vector field}, such that $\eta (\xi )=1$ and $d\eta (\xi ,\cdot )=0$. A semi-Riemannian metric $g$ is said to be an  {\it associated metric} if there exists a tensor $\varphi$ of type $(1,1)$, such that 
$$
\eta =  g(\xi ,\cdot ), \qquad d\eta (\cdot ,\cdot )=g(\cdot ,\varphi\cdot ), \qquad \varphi ^2 =  I -\eta\otimes\xi . 
$$
  Then, $(\varphi,\xi,\eta , g)$   (more briefly, $(\eta ,g)$) is called a {\it  paracontact metric    structure}, and $(M, \varphi,\xi,\eta, g)$ a  {\it paracontact    metric  manifold}.

{ As shown in \cite{Za}, any almost paracontact metric manifold $(M{2n+1}, \varphi,\xi,\eta,g)$ admits a {\em $\varphi$-basis}, that is, a local orthonormal basis of the form $\{ \xi , e_1,\dots,e_n, \varphi e_1,\dots, \varphi e_n,\}$, where $\xi,e_1,\dots,e_n$ are space-like vector fields and $\varphi e_1,\dots, \varphi e_n$ are time-like vector fields.}

 We now report some results on the Levi-Civita connection and curvature of a paracontact metric manifold \cite{Za}, which shall be used in the next Section. Let $\nabla$ and $R$ respectively denote the Levi-Civita connection and the corresponding Riemann curvature tensor, taken with the sign convention $$R_{XY} = \nabla_{[X,Y]} - [\nabla_X , \nabla_Y]$$ for all smooth vector fields $X,Y$. Moreover, we shall denote by $\varrho$
the Ricci tensor of type $(0,2)$, by $Q$ the corresponding endomorphism field and by $r$ the scalar curvature.
The tensor
$h=\frac{1}{2} \mathcal L_{\xi} \varphi$,
where $\mathcal L$ denotes the Lie derivative, is symmetric and satisfies  \cite{Za}:
\begin{equation}\label{eq1}
 \nabla \xi = -  \varphi +\varphi h , \quad \nabla _{\xi} \varphi =0 ,
 \quad h\varphi = -\varphi h , \quad  h \xi = 0  , \quad {\rm tr }h=  {\rm tr } h\varphi =0
\end{equation}
and
\begin{align}\label{eq2}
(\nabla_{\varphi X} \varphi)\varphi Y -(\nabla_{X} \varphi)Y= { 2g(X,Y)\xi-\eta(Y)\big(X-h X +\eta(X)\xi\big).}
\end{align}
    


The Ricci curvature of any paracontact metric manifold $(M^{2n+1}, \eta,g)$ satisfies
\begin{equation}\label{eq3}
\varrho (\xi,\xi)=-2n + {\rm tr} h^2 .
\end{equation}


%

{A paracontact metric  manifold $(M,\eta,g)$ is said to be 
\begin{itemize}
\item $\eta$-{\it Einstein} if  its Ricci operator $Q$ is of the form 
\begin{equation}\label{etaE}
Q=aI + b\eta\otimes\xi,
\end{equation} 
where $a,b$ are smooth functions.
\item { a {\em $(\kappa,\mu)$-space} if its curvature tensor satisfies
\begin{equation}\label{km}
R(X,Y)\xi = \kappa (\eta(X)Y-\eta(Y)X) +\mu (\eta(X)hY-\eta(Y)hX),
\end{equation}
for all tangent vector fields $X,Y$, where $\kappa,\mu$ are smooth functions on $M$. }
\item $K$-{\it paracontact} if $\xi$ is a Killing vector field, or equivalently, $h=0$.  
\item {\it paraSasakian} if the paracontact  structure $(\xi, \eta, \varphi, g)$ is {\it normal}, that is, satisfies $[\varphi,\varphi]+2{\rm d}\eta\otimes\xi=0$. This condition is equivalent to  
\begin{displaymath}
(\nabla_X \varphi) Y = -g(X,Y) \xi +  \eta (Y) X. \end{displaymath}
\end{itemize}

\noindent
Any paraSasakian manifold is $K$-paracontact, and the converse also holds when $n=1$, that is, for three-dimensional spaces. {An alternative definition of paraSasakian manifolds, in terms of  {\em cones} over paraK\"ahler manifolds, was given in \cite{AVG}. We also recall that} any paraSasakian manifold satisfies
\begin{align}\label{eq4}
{	R(X,Y)\xi=-( \eta(X)Y-\eta(Y)X),}
\end{align}
so that it is a $(\kappa,\mu)$-space with $k=-1$. To note that, differently from the contact metric case, condition \eqref{eq4} is necessary but not sufficient for a paracontact metric manifold to be  paraSasakian. 
{ This fact was already pointed out in other papers (see for example \cite{CKM}). However, the present authors could not find explicit examples in literature { of paracontact metric manifolds satisfying \eqref{eq4} which are not paraSasakian}. In Subsection~4.3 we shall provide one of such examples in dimension three.}

%


 %

\subsection{\rid Harmonic vector fields}

\medskip\noindent
We now provide some basic information on harmonic vector fields over a semi-Riemannian manifold. For more details, we refer to \cite{GH}, \cite[Chapter~8]{DrPe} and \cite{C3}.

Let $(M,g)$ be an $m$-dimensional semi-Riemannian manifold, $\nabla$ its Levi-Civita connection and $V$ a smooth vector field on $M$. The {\em energy} of $V$ is, by definition, the energy of the corresponding smooth map $V:(M,g) \to (TM,g^s)$, where $g^s$ is the {\em Sasaki metric} (also referred to as the {\em Kaluza-Klein metric} in Mathematical Physics) on the tangent bundle $TM$ of $M$.  If $M$ is compact, then 
$$E(V)=\frac{1}{2}\int _M ({\rm tr}_g V^* g^s) dv
=\frac{m-1}{2} {\rm vol}(M,g) + \frac{1}{2}\int _M ||\nabla V || ^2 dv,$$
while in the non-compact case, one works over relatively compact domains. By the Euler-Lagrange equation,  a vector field $V$ defines a harmonic map from $(M,g)$ to $(TM,g^s)$ if and only if its {\em tension field} $\tau(V)={\rm tr} (\nabla ^2 V)$ vanishes, that is, when 
$$
{\rm tr} [R(\nabla _{\cdot} V,V)\cdot ] =0
\qquad \text{and} \qquad  \bar{\Delta} V= 0.
$$
Here, $\bar{\Delta} V := -\textrm{tr}{ \nabla}^2 V$ is the socalled {\em rough Laplacian} of $V$.
With respect to any local pseudo-orthonormal frame field $\{E_1,..,E_m\}$ on $(M,g)$, with $\varepsilon _i =g(E_i,E_i) =\pm 1$ for all indices $i=1,\dots,m$, we have 
$$
\bar{\Delta} V= \sum _i \varepsilon _i \left(\nabla _{\nabla _{E_i} E_i} V -\nabla _{E_i} \nabla _{E_i} V\right).
$$
If $g$ is Riemannian and $M$ is compact, then parallel vector fields are the only vector fields defining harmonic maps.

Next, for any real constant $r \neq 0$, let $\mathfrak X^r (M)=\{V\in \mathfrak X (M) : ||V||^2=r \}$ denote the set of tangent vector fields of constant lenght $r$. A vector field $V \in \mathfrak X^r(M)$ is said to be {\em harmonic} if it is a critical point for the energy functional $E|_{\mathfrak X^r(M)}$, restricted to vector fields of the same lenght. The Euler-Lagrange equation of this variational condition yields that $V$ is a harmonic vector field if and only if 
\begin{equation}\label{Cd3}
\bar{\Delta} V \quad \text{is collinear to} \quad V.
\end{equation}
This characterization was first obtained, in the Riemannian case, by G. Wiegmink and C.M. Wood (see \cite{DrPe}, p.65). 
 In semi-Riemannian settings, the same argument applies for vector fields of constant lenght, if not light-like \cite{C3}.

Let $T_1 M$ denote the {\em unit tangent sphere bundle} over $M$, and $g^s$ the metric induced on $T_1 M$ by the Sasaki metric of $TM$. Then, the map 
$V: (M,g)\rightarrow (T_{1}M, g^s )$ is harmonic if $V$ is a harmonic vector field and the  additional condition 
\begin{equation}\label{Cd1}
\textrm{tr}[R(\nabla_{\cdot}V ,V)\cdot] =0
\end{equation}
holds. In analogy with the contact metric case \cite{Pe}, we now introduce the following definition.

\begin{definition}
{\em A paracontact  metric manifold $(M,\varphi,\xi,\eta,g)$ is said to be {\em  H-paracontact} if its Reeb vector field $\xi$ is  a harmonic vector field. }
\end{definition}

\noindent
We will show in Theorem \ref{th3} that this notion is also invariant under $D$-homothetic deformations of the paracontact metric structure.

\section{\rid Harmonicity of the Reeb vector field of a { paracontact manifold}}
\setcounter{equation}{0}

In this Section we shall prove the following characterization of { $H$-paracontact metric} manifolds.
\noindent

\begin{theorem}\label{main1}
A  paracontact  metric  manifold is  $H$-paracontact if and only if
the Reeb vector field $\xi$  is an eigenvector of the Ricci operator.
\end{theorem}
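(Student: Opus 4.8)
The plan is to compute the rough Laplacian $\bar\Delta\xi$ explicitly in terms of the structure tensors, using the formulas \eqref{eq1} and \eqref{eq2}, and then to show two things: first, that the extra condition \eqref{Cd1} involving the curvature is automatically satisfied by $\xi$ on \emph{any} paracontact metric manifold (this should parallel the contact metric case, where the analogous trace vanishes identically); and second, that after this reduction, the harmonicity condition \eqref{Cd3}, namely that $\bar\Delta\xi$ be collinear to $\xi$, is equivalent to $Q\xi$ being collinear to $\xi$, i.e.\ $\xi$ being a Ricci eigenvector. The key identity to aim for is a Bochner-type formula expressing $\bar\Delta\xi$ as $Q\xi$ plus correction terms that are themselves automatically collinear to $\xi$ (or vanish); once such a formula is in hand, the theorem is immediate in both directions.

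Concretely, I would first use $\nabla\xi=-\varphi+\varphi h$ from \eqref{eq1} to write $\nabla_{E_i}\nabla_{E_i}\xi$ and $\nabla_{\nabla_{E_i}E_i}\xi$ with respect to a $\varphi$-basis $\{\xi,e_1,\dots,e_n,\varphi e_1,\dots,\varphi e_n\}$, keeping careful track of the signs $\varepsilon_i=\pm1$ (the $\varphi e_i$ being time-like). Summing, one gets $\bar\Delta\xi$ as a sum of a ``divergence of $\varphi$'' term, a ``divergence of $\varphi h$'' term, and a curvature-free quadratic term in $\nabla\xi$. The standard route is to relate $\mathrm{tr}(\nabla^2\xi)$ to the Ricci operator via the Weitzenböck/commutation formula for a Killing-like or unit vector field: since $\|\xi\|^2=1$ and $\nabla_\xi\xi=0$ (a consequence of $h\xi=0$ and $\varphi\xi=0$), many of the terms collapse. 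I expect the outcome to be of the shape
\begin{equation}\label{laplform}
\bar\Delta\xi = Q\xi + 2n\,\xi + (\text{terms in } h, \nabla h, \varphi h),
\end{equation}
where the remaining terms, on evaluation, turn out to be proportional to $\xi$ — this is exactly the point where the structure equation \eqref{eq2} (and its contractions, together with \eqref{eq3}) must be invoked to kill the components transverse to $\xi$.

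The main obstacle, as the authors themselves flag in the introduction, is that the clean Riemannian argument of \cite{Pe} uses the existence of an \emph{orthonormal} eigenbasis of the self-adjoint operator $Q$, which fails in the semi-Riemannian setting. So rather than diagonalizing $Q$, I would argue directly at the level of the identity \eqref{laplform}: $\bar\Delta\xi$ is collinear to $\xi$ if and only if its $g$-orthogonal-to-$\xi$ component vanishes, and by \eqref{laplform} that component equals the orthogonal-to-$\xi$ component of $Q\xi$; hence $\xi$ is a harmonic vector field iff $Q\xi=\lambda\xi$ for some function $\lambda$, i.e.\ $\xi$ is a Ricci eigenvector. The delicate part will be verifying that the $h$-dependent remainder in \eqref{laplform} really is collinear to $\xi$ for an \emph{arbitrary} paracontact metric manifold — this requires showing that a certain trace involving $h$, $\varphi$ and their covariant derivatives (obtained by contracting \eqref{eq2}) has no component transverse to $\xi$. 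I would also separately check \eqref{Cd1}, $\mathrm{tr}[R(\nabla_\cdot\xi,\xi)\cdot]=0$, which should follow by a symmetry argument: pairing against an arbitrary $X$ and using the curvature symmetries plus $\nabla\xi=-\varphi+\varphi h$ reduces it to traces of $\varphi$ and $\varphi h$ composed with curvature, which vanish by the skew/anti-commuting properties in \eqref{eq1}. Once both pieces are established, the equivalence stated in Theorem \ref{main1} follows.
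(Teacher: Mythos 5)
Your overall route coincides with the paper's: compute $\bar\Delta\xi$ from $\nabla\xi=-\varphi+\varphi h$ in a $\varphi$-basis, relate the result to $Q\xi$, and conclude that $\bar\Delta\xi$ is collinear to $\xi$ exactly when $Q\xi$ is. However, two points need correction. First, the condition \eqref{Cd1}, $\mathrm{tr}[R(\nabla_\cdot\xi,\xi)\cdot]=0$, plays no role in Theorem~\ref{main1}: a harmonic \emph{vector field} is characterized by \eqref{Cd3} alone, while \eqref{Cd1} is the extra requirement for $\xi$ to define a harmonic \emph{map} into $(T_1M,g^s)$. Moreover, your claim that \eqref{Cd1} is automatically satisfied on any paracontact metric manifold is false (nor does the analogous trace vanish identically in the contact Riemannian case); the whole point of Section~4 is to verify \eqref{Cd1} separately for $K$-paracontact manifolds, for $(\kappa,\mu)$-spaces with $\kappa\neq-1$, and for three-dimensional homogeneous examples, which would be vacuous if the trace vanished identically. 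You should simply drop this part of the argument.

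Second, and more seriously, the heart of the proof --- that the component of $\bar\Delta\xi$ transverse to $\xi$ coincides, up to sign, with that of $Q\xi$ --- is only stated as an expectation (``I expect the outcome to be of the shape\dots'', ``the delicate part will be verifying\dots''), and this is exactly the step where the Riemannian argument of \cite{Pe} breaks down. The mechanism that makes it work is concrete and must be exhibited: on one hand, contracting \eqref{eq2} gives $\mathrm{tr}\,\nabla\varphi=-2n\xi$, whence $\bar\Delta\xi=-2n\xi-\mathrm{div}(\varphi h)$; on the other hand, differentiating $\nabla\xi=-\varphi+\varphi h$ yields the curvature identity \eqref{eq3.3}, whose contraction gives $\varrho(X,\xi)=-2n\,\eta(X)+g(\mathrm{div}(\varphi h),X)$. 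The same term $\mathrm{div}(\varphi h)$ appears in both expressions, so it cancels and one obtains the closed formula $\bar\Delta\xi=-4n\xi-Q\xi$, with no residual $h$-dependent remainder at all. Without identifying this cancellation, your assertion that the remainder ``turns out to be proportional to $\xi$'' is unsubstantiated. (Your guessed shape $\bar\Delta\xi=Q\xi+2n\xi+\cdots$ also has the wrong sign on $Q\xi$, though this is immaterial for the collinearity equivalence.)
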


\noindent
 The above Theorem~\ref{main1} will be obtained as a consequence of the following result.


\begin{theorem}\label{main2}
Let $(M,\eta ,\xi ,g,\varphi)$ be a $(2n+1)$-dimensional paracontact metric  manifold. Then, 
\begin{equation}\label{eq3.1}
\bar{\Delta} \xi = - 4n \xi - Q\xi  
=  \parallel\nabla \xi\parallel^2 \xi - \textrm{pr}_{\mid\textrm{ker}\eta}{Q\xi},
\end{equation}
where  $\parallel\nabla \xi\parallel^2 = -(2n + \textrm{tr}h^2)$ and $\textrm{pr}_{\mid \textrm{ker} \eta}$ denotes the projection on $\textrm{ker}\,\eta$.
 \end{theorem}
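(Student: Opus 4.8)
The plan is to compute $\bar{\Delta}\xi$ directly from the frame definition of the rough Laplacian recalled above, which avoids any ambiguities coming from Hodge-theoretic conventions. Throughout, write $A:=\nabla\xi$ for the $(1,1)$-tensor $AX=\nabla_X\xi$, so that $A=-\varphi+\varphi h$ by \eqref{eq1}. For a pseudo-orthonormal frame $\{E_a\}$ one has $(\nabla_{E_a}A)E_a=\nabla_{E_a}\nabla_{E_a}\xi-\nabla_{\nabla_{E_a}E_a}\xi$, hence $\bar{\Delta}\xi=-\sum_a\varepsilon_a(\nabla_{E_a}A)E_a=:-\operatorname{div}A$; note also that $\operatorname{tr}A=\operatorname{div}\xi=-\operatorname{tr}\varphi+\operatorname{tr}(\varphi h)=0$ by \eqref{eq1}.

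The first step is a contracted Codazzi identity for $A$. Because $A=\nabla\xi$, one has immediately $(\nabla_XA)Y-(\nabla_YA)X=[\nabla_X,\nabla_Y]\xi-\nabla_{[X,Y]}\xi=-R(X,Y)\xi$, using the sign convention $R_{XY}=\nabla_{[X,Y]}-[\nabla_X,\nabla_Y]$. Setting $X=E_a$, contracting against $E_a$ via $g$, and summing with weights $\varepsilon_a$, I would use three facts: covariant differentiation commutes with taking metric adjoints, so $\sum_a\varepsilon_a g((\nabla_{E_a}A)Y,E_a)=g(Y,\operatorname{div}A^t)$, where $A^t$ denotes the adjoint of $A$; the trace is parallel, so $\sum_a\varepsilon_a g((\nabla_YA)E_a,E_a)=\operatorname{tr}(\nabla_YA)=Y(\operatorname{tr}A)=0$; and, using the antisymmetry $R(E_a,Y)=-R(Y,E_a)$, the right-hand side is the Ricci contraction $\sum_a\varepsilon_a g(R(Y,E_a)\xi,E_a)=\varrho(Y,\xi)=g(Y,Q\xi)$ (with the normalization of $\varrho$ consistent with \eqref{eq3}, which is pinned down, e.g., by the $K$-paracontact case). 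This yields $\operatorname{div}A^t=Q\xi$.

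Next I would identify $\operatorname{div}A^t$. By \eqref{eq1} the tensor $h$ is symmetric with $h\varphi=-\varphi h$, hence $\varphi h$ is symmetric while $\varphi$ is skew-symmetric, so $A^t=\varphi+\varphi h=A+2\varphi$ and therefore $\operatorname{div}A^t=\operatorname{div}A+2\operatorname{div}\varphi=-\bar{\Delta}\xi+2\operatorname{div}\varphi$. It remains to evaluate $\operatorname{div}\varphi=\sum_a\varepsilon_a(\nabla_{E_a}\varphi)E_a$, which I would do in a $\varphi$-basis $\{\xi,e_1,\dots,e_n,\varphi e_1,\dots,\varphi e_n\}$: the $\xi$-term vanishes since $\nabla_\xi\varphi=0$, while for each $i$, putting $X=Y=e_i$ (so $\eta(e_i)=0$) in \eqref{eq2} gives $(\nabla_{\varphi e_i}\varphi)\varphi e_i-(\nabla_{e_i}\varphi)e_i=2\xi$; recalling that $\varepsilon_{e_i}=1$ and $\varepsilon_{\varphi e_i}=-1$, summing over $i$ yields $\operatorname{div}\varphi=-2n\xi$. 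Combining, $Q\xi=-\bar{\Delta}\xi-4n\xi$, i.e. $\bar{\Delta}\xi=-4n\xi-Q\xi$, the first equality in \eqref{eq3.1}.

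For the remaining assertions I would decompose $Q\xi$ orthogonally as $Q\xi=\varrho(\xi,\xi)\xi+\textrm{pr}_{\mid \textrm{ker}\,\eta}Q\xi$ (recall $\textrm{ker}\,\eta=\xi^{\perp}$, and $\xi$ is non-null); then \eqref{eq3} gives $\bar{\Delta}\xi=-4n\xi-Q\xi=-(2n+\operatorname{tr}h^2)\xi-\textrm{pr}_{\mid \textrm{ker}\,\eta}Q\xi$. Finally, $\|\nabla\xi\|^2=\operatorname{tr}(A^tA)$, and the algebraic identity $A^tA=-\varphi^2(I-h)^2=-(I-2h+h^2)+\eta\otimes\xi$ (which follows from $\varphi^2=I-\eta\otimes\xi$, $h\xi=0$ and $h\varphi=-\varphi h$) gives, upon tracing, $\|\nabla\xi\|^2=-(2n+1)+1-\operatorname{tr}h^2=-(2n+\operatorname{tr}h^2)$. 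The two points requiring care are the sign bookkeeping forced by the curvature convention of the paper together with the matching normalization of $\varrho$, and the structural observation that contracting the Codazzi identity produces $\operatorname{div}A^t$ rather than $\operatorname{div}A$; consequently the skew part $-\varphi$ of $\nabla\xi$ must be handled separately, which is exactly where \eqref{eq2} enters, through the value $\operatorname{div}\varphi=-2n\xi$.
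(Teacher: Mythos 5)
Your proof is correct and follows essentially the same route as the paper's: both hinge on contracting the differentiated relation $\nabla\xi=-\varphi+\varphi h$ (your Codazzi identity for $A$) to express $\varrho(\cdot,\xi)$ as a divergence, together with the evaluation $\mathrm{tr}\,\nabla\varphi=-2n\xi$ obtained from \eqref{eq2}, and the final split of $Q\xi$ using $\varrho(\xi,\xi)=-2n+\mathrm{tr}\,h^2$. Your packaging via the adjoint, $A^t=A+2\varphi$ and $\mathrm{div}\,A^t=Q\xi$, merely reorganizes the paper's separate trace computations for the $\varphi$ and $\varphi h$ parts, and all signs and normalizations check out against the paper's curvature convention.
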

	\begin{proof}
	 Let $M$ be a (2n+1)-dimensional   paracontact  metric  manifold
and $\{ E_1, \dots,E_{2n+1}\}=\{ e_1, \dots,e_{n},  \varphi e_1, \dots,\varphi e_{n}, \xi\}$ a local pseudo-orthonormal $\varphi$-basis, with  $g(e_i,e_i) =-g(\varphi e_i,\varphi e_i)= 1$. We shall use the notation $g(E_i,E_i)=\varepsilon_i =\pm 1$, for $i=1,\dots,2n+1$. We obtain
\begin{align*}
	\bar{\Delta}\xi &= -\sum_{i=1}^{2n+1}\varepsilon_i \left(\nabla_{E_i}\nabla_{E_i}\xi-{ \nabla_{\nabla_{E_i}E_i}\xi }\right)\\
	&= -\sum_{i=1}^{2n+1}\varepsilon_i \left(\nabla_{E_i}\nabla\xi \right){E_i}\\
	&= -\sum_{i=1}^{2n+1}\varepsilon_i \left(\nabla_{E_i}(-\varphi +\varphi h ) \right){E_i}\\
	&=   \sum_{i=1}^{2n+1}\varepsilon_i \left( \nabla_{E_i}\varphi \right)E_i -  \sum_{i=1}^{2n+1}\varepsilon_i \left( \nabla_{E_i}\varphi\, h \right)E_i \\ 
	&=  \textrm{tr}   \nabla \varphi - \textrm{div}(\varphi\, h).
\end{align*}
Using formula \eqref{eq2}, we have
\begin{align}\label{eq3.1'}
 \textrm{tr}   \nabla \varphi =  (\nabla_{e_i} \varphi){e_i}-(\nabla_{\varphi e_i} \varphi){\varphi e_i} +(\nabla_{\xi} \varphi){\xi}= -2n\xi
\end{align}
{ and so,} 
\begin{align}\label{eq3.2}
	\bar{\Delta}\xi &= -2n \xi - \textrm{div}(\varphi\,h).
\end{align}
{By Equation \eqref{eq1}, $\nabla \xi= -\varphi +\varphi h$. Differentiating, we get}
\begin{align}\label{eq3.3}
R(X,Y)\xi= (\nabla_{X} \varphi)Y -(\nabla_{Y} \varphi)X- (\nabla_{X} \varphi h)Y+ (\nabla_{Y} \varphi h)X,
\end{align}
from which we deduce that the Ricci curvature $\varrho(X,\xi)$ is given by 
\begin{align}\label{eq3.4}
\varrho(X,\xi)&= \textrm{tr} R(X,\cdot )\xi  \\&= \sum_{i=1}^{{2n}}\varepsilon_i g\left(R(X,E_i)\xi,E_i\right) \nonumber \\
             &= \sum_{i=1}^{{2n}}\varepsilon_i g((\nabla_X \varphi)E_i,E_i)- \sum_{i=1}^{{2n}}\varepsilon_i g((\nabla_{E_i} \varphi)X,E_i) \nonumber \\
	 & \qquad -\sum_{i=1}^{{2n}}\varepsilon_i g((\nabla_X \varphi\,h)E_i,E_i)   + \sum_{i=1}^{{2n}}\varepsilon_i g((\nabla_{E_i} \varphi\,h)X,E_i). \nonumber 
\end{align}
By direct calculation, we find 
\begin{align}%
&\sum_{i=1}^{{2n}}\varepsilon_i g((\nabla_X \varphi)E_i,E_i)=0 \label{tr0},\\[6 pt]
&\sum_{i=1}^{{2n}}\varepsilon_i g((\nabla_X \varphi\,h)E_i,E_i)=\textrm{tr} \nabla_X(\varphi\,h) = \nabla_X\textrm{tr}(\varphi\,h)=0 ,\nonumber \\[6 pt]
&\sum_{i=1}^{{2n}}\varepsilon_i g((\nabla_{E_i} \varphi\,h)X,E_i) = g(\textrm{div}(\varphi\,h), X) \nonumber  \end{align}
and, taking into account	$\textrm{tr}\nabla\varphi =-2n\xi$, 
\begin{equation}\label{trace}
\sum_{i=1}^{{2n}}\varepsilon_i g((\nabla_{E_i} \varphi)X,E_i) = 2n\, \eta(X).
\end{equation}
{ We then replace into \eqref{eq3.4} and  we obtain}
\begin{align}\label{eq3.5}
	\varrho (X,\xi)= - 2n \eta(X) + g(\textrm{div}(\varphi\,h), X).
\end{align}
On the other hand, $\varrho(\xi,\xi)= -2n + \textrm{tr}h^2= -4n -\left\|\nabla\xi\right\|^2$. So, Equations~\eqref{eq3.2} and \eqref{eq3.5} yield
\begin{align*}
	\bar{\Delta}\xi &= -4n  \xi - Q\xi\\
	&= -(2n+ \textrm{tr}h^2) \xi - (Q\xi)_{|\textrm{ker}\eta}\\
	&=   \left\|\nabla\xi\right\|^2 \xi - (Q\xi)_{|\textrm{ker}\eta} 
	\end{align*}
and this ends the proof.
	\end{proof}

\noindent
 
\begin{remark}
{ {\em The contact metric analogues of the above  Theorems~\ref{main1} and \ref{main2} were proved in \cite{Pe}. The proof of these contact Riemannian results
used in an essential way the fact that the tensor $h$, being self-adjoint, admits an orthonormal basis of eigenvectors. The lack of such information in the paracontact metric case required a different approach to the proof of the above  Theorem~\ref{main2}.}
}
\end{remark}

As an immediate consequence of Theorem~\ref{main1} and the definiton of $\eta$-Einstein manifolds, we have the following.

\begin{corollary}\label{cor3.4}
$\eta$-Einstein  paracontact metric  manifolds are $H$-paracontact.
 \end{corollary}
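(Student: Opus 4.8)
The plan is to combine the characterization of $H$-paracontact manifolds given by Theorem~\ref{main1} with the defining equation \eqref{etaE} of an $\eta$-Einstein paracontact metric manifold. First I would recall that, by Theorem~\ref{main1}, a paracontact metric manifold is $H$-paracontact precisely when the Reeb vector field $\xi$ is an eigenvector of the Ricci operator $Q$; that is, when $Q\xi = \lambda \xi$ for some function $\lambda$ on $M$. So the entire task reduces to checking that the $\eta$-Einstein condition forces $\xi$ to be such an eigenvector.

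Next I would simply apply the $\eta$-Einstein relation $Q = aI + b\,\eta\otimes\xi$ to the vector field $\xi$. Using $\eta(\xi)=1$ (which holds by definition of the Reeb vector field of a paracontact metric structure), one gets
\begin{equation*}
Q\xi = a\,\xi + b\,\eta(\xi)\,\xi = (a+b)\,\xi.
\end{equation*}
Hence $\xi$ is an eigenvector of $Q$ with eigenvalue $a+b$. By Theorem~\ref{main1}, the manifold is therefore $H$-paracontact, which is exactly the assertion of the Corollary.

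There is essentially no obstacle here: this is a one-line consequence of Theorem~\ref{main1}, and the only thing to verify is the trivial computation $\eta(\xi)=1$ together with the fact that $a,b$ being smooth functions still leaves $a+b$ a well-defined (not necessarily constant) eigenvalue function, which is all that Theorem~\ref{main1} requires. One could, if desired, also remark that the converse fails in general, since $Q\xi=\lambda\xi$ does not by itself force $Q$ to have the special form \eqref{etaE} on all of $\ker\eta$; but for the statement as given, the argument above suffices.
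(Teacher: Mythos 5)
Your proposal is correct and matches the paper's reasoning: the paper simply states the corollary as an immediate consequence of Theorem~\ref{main1} and the definition \eqref{etaE}, and your computation $Q\xi=(a+b)\xi$ is exactly the step being left implicit. Nothing further is needed.
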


{We now prove that any $D$-homothetic deformation of a $H$-paracontact metric structure is again $H$-paracontact. This fact shows that the harmonicity of the Reeb vector field is rather  natural  for paracontact metric manifolds, and permits to build new examples of $H$-paracontact metric structures from the known ones.
 
Given a paracontact metric structure $(\eta,g,\xi,\varphi)$, its {\em $D$-homothetic deformation}, determined by any real constant $t \neq 0$, is the new paracontact metric structure $(\eta_t,g_t,\xi_t,\varphi_t)$,  defined by 
\begin{equation}\label{DH}
\eta_{t}=t\eta , \qquad \xi_{t}= t^{-1}\xi , \qquad \varphi =\varphi, \qquad g_{t}=tg + \varepsilon  t(t-1)\eta\otimes\eta .
\end{equation}
(see \cite{Za},\cite{CKM}). In \cite{CKM}, the relationships between the Levi-Civita connections $\nabla$ and $\nabla _t$ and curvature tensors $R$ and $R_t$ of $g$ and $g_t$ respectively were investigated. In particular, { by Proposition~3.6 in \cite{CKM}, rewritten for our sign convention  of the curvature, we have}
\begin{align*}
t R_t (X,Y)\xi_t =& R(X,Y)\xi +(t-1)^2 (\eta(Y)X -\eta(X)Y)\\[4 pt]
&+(t-1)\big( (\nabla _X \varphi)Y -(\nabla _Y \varphi)X +\eta(Y)(X-hX)-\eta(X)(Y-hY)\big) . \nonumber
\end{align*}
When both $X,Y$ belong to ker$\eta$, the above equation reduces to
\begin{align}\label{RRt}
t R_t (X,Y)\xi_t =& R(X,Y)\xi +(t-1)\big( (\nabla _X \varphi)Y -(\nabla _Y \varphi)X .
\end{align}
Let now $\{E_1,\dots,E_{2n+1}\}=\{e_1,\dots,e_n,\varphi e_1,\dots, \varphi e_n,\xi \}$ be a local $\varphi$-basis for $(\eta,g\xi,\varphi)$. To note that ker$\eta_t$=ker$\eta$ and $g_t =t g$ on ker$\eta$. Therefore, $\{\frac{1}{\sqrt{t}}e_1,\dots,\frac{1}{\sqrt{t}}e_n,\frac{1}{\sqrt{t}}\varphi e_1,\dots, \frac{1}{\sqrt{t}}\varphi e_n,\xi_t \}$ is a local basis of vector fields, pseudo-orthonormal with respect to $g_t$. We can now calculate the Ricci tensor $\varrho_t (X,\xi_t)$, for any vector field $X\in$ker$\eta$, by contraction of \eqref{RRt}. Taking into account Equations~\eqref{tr0} and \eqref{trace}, we get
\begin{align*}
\varrho_t (X,\xi_t) =& \frac{1}{t}\sum _{i=1}^{2n} \varepsilon _i g_t (R_t (X,E_i)\xi_t,E_i) = \sum _{i=1}^{2n} \varepsilon _i g (R_t (X,E_i)\xi_t,E_i) \\[4 pt]
=& \frac{1}{t}\sum _{i=1}^{2n} \varepsilon _i g (R (X,E_i)\xi +\frac{t-1}{t}\sum _{i=1}^{2n}g((\nabla_{X}\varphi) E_i,E_i) -\frac{t-1}{t}\sum _{i=1}^{2n}g((\nabla_{E_i}\varphi) X,E_i)  \\[.4 pt]
=& \frac{1}{t} \varrho(X,\xi)-\frac{t-1}{t}\cdot 2n \eta(X) \\[4 pt] 
=& \frac{1}{t} \varrho(X,\xi). 
\end{align*}
Thus, the property \lq\lq $\xi$ is an eigenvector of the Ricci operator\rq\rq  \ is invariant under $D$-homothetic deformations, and  Theorem~\ref{main1} yields the following result.  

\begin{theorem}\label{th3} 
The class of  $H$-contact semi-Riemannian manifolds is invariant under  $D$-homo\-thetic deformations.
\end{theorem}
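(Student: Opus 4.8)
The plan is to deduce Theorem~\ref{th3} directly from the combination of Theorem~\ref{main1} and the computation carried out just above the statement, namely the identity $\varrho_t(X,\xi_t)=\tfrac{1}{t}\varrho(X,\xi)$ for all $X\in\ker\eta$. First I would recall that, by Theorem~\ref{main1}, a paracontact metric manifold is $H$-paracontact precisely when $\xi$ is an eigenvector of the Ricci operator $Q$; since $\varphi^2=I-\eta\otimes\xi$ gives the $g$-orthogonal (in the semi-Riemannian sense) splitting $TM=\mathrm{span}(\xi)\oplus\ker\eta$, the eigenvector condition for $Q$ is equivalent to the vanishing of $\mathrm{pr}_{\mid\ker\eta}Q\xi$, i.e. to $\varrho(X,\xi)=0$ for every $X\in\ker\eta$.

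Next I would observe that the $D$-homothetic deformation \eqref{DH} preserves the contact distribution, $\ker\eta_t=\ker\eta$, and rescales the Reeb field by $\xi_t=t^{-1}\xi$. Hence the $H$-paracontact condition for the deformed structure $(\eta_t,g_t,\xi_t,\varphi_t)$ reads $\varrho_t(X,\xi_t)=0$ for all $X\in\ker\eta_t=\ker\eta$. The displayed computation preceding the statement shows that $\varrho_t(X,\xi_t)=\tfrac{1}{t}\varrho(X,\xi)$; since $t\neq 0$, this quantity vanishes for all $X\in\ker\eta$ if and only if $\varrho(X,\xi)=0$ for all $X\in\ker\eta$, that is, if and only if the original structure is $H$-paracontact. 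Applying Theorem~\ref{main1} in both directions then yields that $(M,\eta_t,g_t,\xi_t,\varphi_t)$ is $H$-paracontact exactly when $(M,\eta,g,\xi,\varphi)$ is, which is the assertion.

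There is essentially no obstacle once the key Ricci identity is in hand: the proof is a short logical chain, Theorem~\ref{main1} $\Rightarrow$ reformulate $H$-paracontactness as $\varrho(\cdot\,,\xi)|_{\ker\eta}=0$ $\Rightarrow$ invoke $\varrho_t(X,\xi_t)=\tfrac{1}{t}\varrho(X,\xi)$ $\Rightarrow$ Theorem~\ref{main1} again. The only point requiring a word of care is that $g_t$ restricted to $\ker\eta$ equals $t g$ (as noted in the text), so the pseudo-orthonormal $\varphi_t$-basis used to contract \eqref{RRt} is $\{t^{-1/2}e_i,\,t^{-1/2}\varphi e_i,\,\xi_t\}$, and the factors of $t$ bookkeep correctly; this is already done in the preamble to the statement. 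I would therefore present the proof as a two-line corollary-style argument, explicitly citing \eqref{DH}, the identity $\varrho_t(X,\xi_t)=\tfrac{1}{t}\varrho(X,\xi)$, and Theorem~\ref{main1}.

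(Note: the statement as printed says ``$H$-contact semi-Riemannian manifolds''; in the proof I would phrase it consistently as ``$H$-paracontact metric manifolds'', since that is the class actually treated, but in any case the argument is identical.)
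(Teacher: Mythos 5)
Your proposal is correct and follows essentially the same route as the paper: the authors likewise derive the identity $\varrho_t(X,\xi_t)=\tfrac{1}{t}\varrho(X,\xi)$ for $X\in\ker\eta$ from the curvature relation \eqref{RRt} and then conclude via Theorem~\ref{main1} that the Ricci-eigenvector condition, hence $H$-paracontactness, is preserved. Your reformulation of the eigenvector condition as $\varrho(\cdot,\xi)|_{\ker\eta}=0$ and your remark on the rescaled $\varphi_t$-basis are both consistent with the paper's computation.
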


\section{\rid Examples}

We shall now investigate the relationships among the class of $H$-paracontact spaces and some relevant  classes of paracontact metric manifolds. 

\subsection{$K$-paracontact and paraSasakian manifolds}

We start considering the $K$-paracontact case, for which we shall prove the following.

\begin{theorem}\label{teo.1}  The Ricci operator of a $K$-paracontact metric manifold satisfies
\begin{align*}
	Q\xi = -2n \xi .
	\end{align*}
Hence, $K$-paracontact (in particular, paraSasakian) manifolds are $H$-paracontact. Moreover, the Reeb vector field $\xi$ of any $K$-paracontact metric manifold $(M,\eta,g)$ defines a harmonic map
$\xi: (M,g) \rightarrow (T_1 M, g^s)$.
\end{theorem}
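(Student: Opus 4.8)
My plan has three parts: (i) compute $Q\xi$ directly from formula \eqref{eq3.5}; (ii) deduce the $H$-paracontact property from Theorem~\ref{main1}; (iii) for the harmonic map claim, use Theorem~\ref{main2} to see that $\xi$ is a harmonic vector field, and then verify the extra curvature condition \eqref{Cd1}.

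For a $K$-paracontact manifold $h=0$, hence $\varphi\,h=0$ and $\mathrm{div}(\varphi\,h)=0$. Substituting into \eqref{eq3.5} I get $\varrho(X,\xi)=-2n\,\eta(X)=g(-2n\,\xi,X)$ for every vector field $X$, so $Q\xi=-2n\,\xi$. In particular $\xi$ is a Ricci eigenvector, so by Theorem~\ref{main1} the manifold is $H$-paracontact; since paraSasakian manifolds are $K$-paracontact, the same holds for them.

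For the last assertion, recall that $\xi:(M,g)\to(T_1M,g^s)$ is harmonic as soon as $\xi$ is a harmonic vector field and \eqref{Cd1} holds. The first condition is immediate: by Theorem~\ref{main2} and $Q\xi=-2n\,\xi$ we have $\bar\Delta\xi=-4n\,\xi-Q\xi=-2n\,\xi$, which is collinear to $\xi$. To establish \eqref{Cd1}, I would use that $h=0$ gives $\nabla_X\xi=-\varphi X$ by \eqref{eq1}, so that $\mathrm{tr}[R(\nabla_{\cdot}\xi,\xi)\cdot]=-\sum_i\varepsilon_i R(\varphi E_i,\xi)E_i$, and evaluate this trace over a $\varphi$-basis $\{e_1,\dots,e_n,\varphi e_1,\dots,\varphi e_n,\xi\}$. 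Since $\varphi\xi=0$ and $\varphi^2 e_a=e_a$ on $\mathrm{ker}\,\eta$, after reindexing and applying the first Bianchi identity this sum collapses to $\sum_{a=1}^{n}R(e_a,\varphi e_a)\xi$. Finally, \eqref{eq3.3} with $h=0$ yields $R(e_a,\varphi e_a)\xi=(\nabla_{e_a}\varphi)\varphi e_a-(\nabla_{\varphi e_a}\varphi)e_a$, and the instance $X=e_a$, $Y=\varphi e_a$ of \eqref{eq2} gives $(\nabla_{\varphi e_a}\varphi)e_a-(\nabla_{e_a}\varphi)\varphi e_a=2g(e_a,\varphi e_a)\xi-\eta(\varphi e_a)\big(e_a+\eta(e_a)\xi\big)=0$, because $g(e_a,\varphi e_a)=0$ ($\varphi$ is $g$-skew) and $\eta(\varphi e_a)=0$ ($\varphi\xi=0$). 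Hence each $R(e_a,\varphi e_a)\xi=0$, so \eqref{Cd1} holds and $\xi$ is a harmonic map.

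The two trace computations of the second paragraph are routine. The main obstacle is the verification of \eqref{Cd1}: one has to handle the contraction over the semi-Riemannian $\varphi$-basis carefully, in particular the signs $\varepsilon_i=-1$ attached to the time-like members $\varphi e_a$, and one has to pick the correct instance of \eqref{eq2} that annihilates each $R(e_a,\varphi e_a)\xi$. Alternatively one could invoke the known Riemannian fact that a unit Killing vector field which is a Ricci eigenvector determines a harmonic map into $T_1M$, and check that its proof carries over to the semi-Riemannian setting with a $\varphi$-basis in place of an orthonormal eigenbasis.
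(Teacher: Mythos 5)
Your proposal is correct and follows essentially the same route as the paper: your verification of the extra condition \eqref{Cd1} — reducing $\mathrm{tr}[R(\nabla_{\cdot}\xi,\xi)\cdot]$ to $\sum_a R(e_a,\varphi e_a)\xi$ via the first Bianchi identity and killing each term with \eqref{eq3.3} (for $h=0$) together with a suitable instance of \eqref{eq2} — is exactly the paper's argument. The only (harmless) difference is in the first part, where the paper re-derives $\varrho(\xi,Y)=-2n\,\eta(Y)$ from the Bianchi identity and the identity $R(\xi,X,Y,Z)=g((\nabla_X\varphi)Z,Y)$, whereas you simply specialize the already-established general formula \eqref{eq3.5} to $h=0$, which is a slightly more economical way to reach the same conclusion.
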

		
\begin{proof} 
It follows from Equation~\eqref{eq3.3} that the curvature tensor of a $K$-paracontact metric manifold satisfies
\begin{align}\label{eq3.7}
	R(Y,Z,\xi,X)= g((\nabla_Y \varphi)Z, X)- g((\nabla_Z \varphi)Y, X) .
\end{align}
Then, using the first Bianchi identity, again \eqref{eq3.3}, and hence \eqref{eq3.7}, we get
\begin{align}\label{eq3.8}
	R(\xi,X,Y,Z)= g((\nabla_X \varphi)Z, Y).
\end{align}
Let now $\{ e_1, \dots,e_{n},  \varphi e_1, \dots,\varphi e_{n}, \xi\}$ be a local pseudo-orthonormal $\varphi$-basis, with  $g(e_i,e_i) = -g(\varphi e_i,\varphi e_i)= 1$. Using the formulae \eqref{eq3.8} and   \eqref{eq3.1'}, we obtain
\begin{align*}
\varrho(\xi, Y)&= \textrm{tr} R(\xi,\cdot, Y,\cdot) \\ &= \sum_{i=1}^{n} g\left(R(\xi,e_i,Y,e_i\right)- \sum_{i=1}^{n} g\left(R(\xi,\varphi e_i,Y,\varphi e_i\right)\\
	 & =\sum_{i=1}^{n}  g((\nabla_{e_i} \varphi)e_i, Y)- g((\nabla_{\varphi e_i} \varphi)\varphi e_i, Y)\\
	 & = g(\textrm{tr}\nabla \varphi,Y) \\ &=-2n\eta(Y),
\end{align*}
that is, $Q\xi=-2n\xi$. Hence, $(M,\eta,g)$ is $H$-contact.  In order to conclude that $\xi: (M,g) \rightarrow (T_1 M, g^s)$ is a harmonic map, it then suffices to prove that ${\rm tr} [R(\nabla\xi,\xi)\cdot ] = 0$.

Since $M$ is $K$-contact, the first equation in \eqref{eq1} reduces to $\nabla \xi=-\varphi$.
{ With respect to the above local pseudo-orthonormal $\varphi$-basis  $\left\{e_i,\varphi e_i, \xi \right\}$, using $\nabla \xi=-\varphi$ and the first Bianchi identity, we obtain }
\begin{eqnarray*}
	{\rm tr} [R(\nabla\xi,\xi)\cdot ] &=&-{\rm tr} [R(\varphi\cdot ,\xi)\cdot ] \\
	  &=&-\sum^{n}_{i=1}  R(\varphi e_i,\xi)e_i +\sum^{n}_{i=1} R(\varphi^2 e_i,\xi)\varphi e_i \\
	&=&-\sum^{n}_{i=1}R(e_i,\varphi e_i)\xi .
	\end{eqnarray*}
On the other hand, by equations~\eqref{eq3.3}  and  \eqref{eq2} with $h=0$, we get
\begin{eqnarray*}	
R(e_i,\varphi e_i)\xi	= \big(\nabla_{e_i}\varphi\big) \varphi e_i  -    \big(\nabla_{\varphi e_i}\varphi\big)e_i = \big(\nabla_{\varphi e_i}\varphi\big) \varphi^2 e_i  -    \big(\nabla_{\varphi e_i}\varphi\big)e_i = 0.
\end{eqnarray*}
So,   we conclude that ${\rm tr} [R(\nabla\xi,\xi)\cdot ]=0$ and this ends the proof. 
\end{proof}

{
Consider $\mathbb R^{2n+2}$, equipped with the standard paracomplex structure $\mathbb I$ and flat metric $g$ of neutral signature. Then, any non-degenerate hypersurface in $(\mathbb R^{2n+2},\mathbb  I, g)$  inherits an integrable para-contact hermitian structure \cite{Z2}. In particular, a standard example of paraSasakian manifold is given by the Hyperboloid 
$$HS^{2n+1}:=\{(x_0,y_0,\dots,x_n,y_n) | x_0^2 +\dots +x_n ^2-y_0^2- \dots y_n^2 =1 \},$$
with the natural para-CR structure induced by its embedding in $(\mathbb R^{2n+2},\mathbb I,g)$. In this case,
$$\eta =\sum _{j=0}^n (y_jdx_j-x_j dy_j), \quad \xi=\sum_{j=0} ^n (x_j \frac{\partial}{\partial y_j}+y_j\frac{\partial}{\partial x_j}), \quad \varphi=\mathbb I |_{HS^{2n+1}}, \quad g |_{HS^{2n+1}\times HS^{2n+1}}$$
is a paraSasakian structure. By the above Theorem~\ref{teo.1}, {\em the Reeb vector field $\xi$ of the canonical  paraSasakian structure of $HS^{2n+1}$ defines a harmonic map into its unit tangent sphere bundle.}
}

\begin{remark}
{\em $K$-contact Riemannian manifolds are {\em characterized} by the Ricci curvature condition $Q\xi=2n\xi$. As we proved in the above Theorem~\ref{teo.1}, $K$-paracontact manifolds satisfy the corresponding condition $Q\xi=-2n\xi$, but the converse does not hold. In fact, if $Q\xi=-2n\xi$, then by \eqref{eq3} we find tr$h^2=0$. However, for a paracontact metric manifold, the tensor $h$ needs not to be diagonalizable. Consequently, tr$h^2=0$ does not imply that a paracontact metric manifold is $K$-paracontact. Explicit examples of paracontact metric manifolds with tr$h^2=0$ (indeed, with $h^2=0$) but $h \neq 0$ will be given { in the next subsection~\ref{3Dex}.}}
\end{remark}

\subsection{$(\kappa,\mu)$-paracontact metric manifolds}

We now consider paracontact metric manifolds, whose Reeb vector field satisfies the nullity condition \eqref{km}.  By contraction of \eqref{km},  it is easily seen that the Ricci operator of a $(\kappa,\mu)$-paracontact metric manifold satisfies $Q\xi=2n\kappa \xi$ (see also \cite{CKM}, p.670).
Therefore, Theorem~\ref{main1} implies at once that $(\kappa,\mu)$-paracontact metric manifolds are $H$-paracontact.

Next, the following formula holds for $(\kappa,\mu)$-paracontact metric manifolds with $\kappa \neq -1$ (see \cite{CKM}, pp.682 and 690, rewritten here for our sign convention on the curvature tensor):
\begin{equation}\label{kno1}
\begin{array}{rcl}
R(X,Y)hZ-hR(X,Y)Z&\hspace{-3mm}=\hspace{-3mm}&\big(\kappa(\eta(Y)g(hX,Z)-\eta(X)g(hY,Z)) \\[4 pt] &\hspace{-3mm} &\hspace{-3mm} +\mu (\kappa+1)(\eta(Y)g(X,Z)-\eta(X)g(Y,Z)) \big)\xi \\[4 pt]
&\hspace{-3mm} &\hspace{-3mm}+\kappa \big( g(Y,\varphi Z)\varphi hX -g(X,\varphi Z)\varphi hY+g(Z,\varphi hY)\varphi X-g(Z,\varphi hX)\varphi Y  \big)\\[4 pt]
&\hspace{-3mm} &\hspace{-3mm} +\eta(Z)\big(\eta(Y)hX-\eta(X)hY \big) +\mu \big((\kappa+1)\eta(Z)(\eta(Y)X-\eta(X)Y) \big)\\[4 pt]
&\hspace{-3mm} &\hspace{-3mm}+2\mu g(X,\varphi Y)\varphi h Z,
\end{array}
\end{equation}
for all tangent vector fields $X,Y,Z$.  
Let now $(M,\varphi,\xi,\eta,g)$ denote any $(\kappa,\mu)$-paracontact metric manifold  with $\kappa \neq 1$, and consider a $\varphi$-basis $\{\xi,e_1,\dots,e_n,\varphi e_1,\dots,\varphi e_n\}$. Using the first equation in \eqref{eq1} and the first Bianchi identity, we find 
\begin{align*}
-{\rm tr}[R(\nabla . \, \xi,\xi)\cdot] &= -\sum _{i=1}^{n} R(-\varphi e_i+\varphi h e_i, \xi)e_i +\sum _{i=1}^{n} R(- e_i+\varphi h \varphi e_i, \xi) \varphi e_i \\[4 pt] &=  -\sum _{i=1}^{n} R(-\varphi e_i+\varphi h e_i, \xi)e_i +\sum _{i=1}^{n} R(- e_i- h e_i, \xi) \varphi e_i \\[4 pt] &=
 \sum _{i=1}^{n} R(\xi, e_i) \varphi h e_i +\sum _{i=1}^{n} R(\xi, \varphi e_i) h e_i \\[4 pt] &=
 -\sum _{i=1}^{n} R(\xi, e_i) h\varphi e_i -\sum _{i=1}^{n} R(\xi, \varphi e_i) h \varphi ^2 e_i .
\end{align*}
On the other hand, by \eqref{kno1} we have
$$R(\xi,E_j)h\varphi E_j =hR(\xi,E_j)\varphi E_j +\kappa g(hE_j,\varphi E_j)\xi,$$
for any $E_j \in \{e_1,\dots,e_n,\varphi e_1,\dots,\varphi e_n\}$. Therefore, we conclude that
\begin{align*}
-{\rm tr}[R(\nabla . \, \xi,\xi)\cdot] &= 
 -\sum _{i=1}^{n} R(\xi, e_i) h\varphi e_i -\sum _{i=1}^{n} R(\xi, \varphi e_i) h \varphi ^2 e_i \\[4 pt] &= -\sum _{i=1}^{n} \big( hR(\xi, e_i) \varphi e_i +\kappa g(he_i,\varphi e_i)\xi -hR(\xi, \varphi e_i)e_i -\kappa g(h \varphi e_i,\varphi ^2 e_i)\xi \big)  \\[4 pt] &= -\sum _{i=1}^{n} \big( hR(\xi, e_i) \varphi e_i +\kappa g(he_i,\varphi e_i)\xi -hR(\xi, \varphi e_i)e_i -\kappa g(h e_i,\varphi  e_i)\xi \big)
 \\[4 pt] &= -\sum _{i=1}^{n} h\big( R(\xi, e_i) \varphi e_i  -R(\xi, \varphi e_i)e_i \big)  
 \\[4 pt] &= -\sum _{i=1}^{n} h\big( R(e_i, \varphi e_i) \xi \big) =0,  
\end{align*}
by the $(\kappa,\mu)$-nullity condition. Therefore, we proved the following result.

\begin{theorem}\label{teo3.4}
$(\kappa,\mu)$-paracontact metric manifolds are $H$-paracontact. Moreover, whenever $\kappa \neq -1$, the Reeb vector field of a paracontact $(\kappa,\mu)$-space also defines a harmonic map into its unit tangent sphere bundle.
\end{theorem}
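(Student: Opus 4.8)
The plan is to prove the two assertions of Theorem~\ref{teo3.4} separately, relying on the curvature identities already collected in the excerpt. For the first assertion, I would simply observe that contracting the $(\kappa,\mu)$-nullity condition \eqref{km} over a $\varphi$-basis $\{\xi,e_1,\dots,e_n,\varphi e_1,\dots,\varphi e_n\}$ gives $\varrho(X,\xi)=2n\kappa\,\eta(X)$, hence $Q\xi=2n\kappa\,\xi$; since this says exactly that $\xi$ is a Ricci eigenvector, Theorem~\ref{main1} immediately yields that the manifold is $H$-paracontact. This part is essentially a one-line contraction and carries no real difficulty.

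For the second assertion, the point is that once we know $\xi$ is harmonic (hence $\bar\Delta\xi$ is collinear to $\xi$ by \eqref{Cd3}), the map $\xi:(M,g)\to(T_1M,g^s)$ is harmonic as soon as the remaining condition \eqref{Cd1}, namely $\mathrm{tr}[R(\nabla_{\cdot}\xi,\xi)\cdot]=0$, is verified. So the substance of the proof is to evaluate $\mathrm{tr}[R(\nabla_{\cdot}\xi,\xi)\cdot]$ on a $\varphi$-basis under the hypothesis $\kappa\neq-1$. I would proceed exactly along the displayed computation: first substitute $\nabla\xi=-\varphi+\varphi h$ from \eqref{eq1}, expand the trace over the $2n$ vectors $e_i,\varphi e_i$, and use the first Bianchi identity together with $h\varphi=-\varphi h$, $\varphi^2=I-\eta\otimes\xi$, $h\xi=0$ to rewrite everything in the form $-\sum_i h\big(R(\xi,e_i)\varphi e_i-R(\xi,\varphi e_i)e_i\big)$ plus terms built out of $R(\xi,E_j)h\varphi E_j-hR(\xi,E_j)\varphi E_j$. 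The key auxiliary identity is that, by the commutator formula \eqref{kno1} (valid precisely because $\kappa\neq-1$ is needed for that formula to hold in the cited form), $R(\xi,E_j)h\varphi E_j=hR(\xi,E_j)\varphi E_j+\kappa\,g(hE_j,\varphi E_j)\xi$; the scalar corrections $\kappa\,g(he_i,\varphi e_i)\xi$ coming from $e_i$ and $\varphi e_i$ cancel in pairs because $g(h\varphi e_i,\varphi^2 e_i)=g(he_i,\varphi e_i)$.

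After this cancellation one is left with $-\sum_i h\big(R(\xi,e_i)\varphi e_i-R(\xi,\varphi e_i)e_i\big)$, and a final application of the first Bianchi identity turns the bracket into $R(e_i,\varphi e_i)\xi$, which vanishes by the $(\kappa,\mu)$-nullity condition \eqref{km} since the $\varphi$-basis vectors $e_i,\varphi e_i$ lie in $\ker\eta$. Hence $\mathrm{tr}[R(\nabla_{\cdot}\xi,\xi)\cdot]=0$, and combined with the first part we conclude that $\xi:(M,g)\to(T_1M,g^s)$ is harmonic. The main obstacle — such as it is — is purely bookkeeping: keeping track of the signs $\varepsilon_i=\pm1$ attached to the time-like vectors $\varphi e_i$ and making sure that the $\varphi$-applications and Bianchi permutations line up so that the scalar $\xi$-terms really do cancel. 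There is no conceptual difficulty beyond the correct use of \eqref{kno1}, which is the reason the hypothesis $\kappa\neq-1$ appears; for $\kappa=-1$ that commutator formula is not available in this shape, so the harmonic-map conclusion is only claimed in the complementary range.
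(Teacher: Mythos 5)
Your proposal is correct and follows essentially the same route as the paper: contraction of the nullity condition \eqref{km} gives $Q\xi=2n\kappa\,\xi$ and Theorem~\ref{main1} yields the $H$-paracontact property, while the harmonic-map condition \eqref{Cd1} is verified by exactly the computation you describe, using $\nabla\xi=-\varphi+\varphi h$, the first Bianchi identity, and the commutator formula \eqref{kno1} to reduce everything to $-\sum_i h\bigl(R(e_i,\varphi e_i)\xi\bigr)=0$. The pairwise cancellation of the $\kappa\,g(he_i,\varphi e_i)\xi$ terms via $g(h\varphi e_i,\varphi^2 e_i)=g(he_i,\varphi e_i)$ is precisely the step the paper uses as well.
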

  
It should be noted that paraSasakian manifolds are $(\kappa,\mu)$-paracontact metric manifolds with $\kappa =-1$, but not conversely. It is interesting to investigate non-paraSasakian paracontact $(\kappa,\mu)$-spaces with $\kappa =-1$ \cite{CKM}, also in order to decide whether their Reeb vector field defines a harmonic map into the unit tangent sphere bundle.

\subsection{Three-dimensional homogeneous paracontact metric manifolds}\label{3Dex}

In \cite{CIll}, the first author obtained the complete classification of three-dimensional homogeneous paracontact metric manifolds. The classification result is the following.

\begin{theorem}{\bf \cite{CIll}}\label{mainTh}
A simply connected complete homogeneous paracontact metric three-manifold is isometric to a Lie group $G$ with a left-invariant paracontact metric structure $(\varphi,\xi,\eta,g)$. More precisely, one of the following cases occurs: 

\medskip\noindent 
(i) If $G$ is unimodular, then the Lie algebra of $G$ is one of the following:  
\begin{itemize}
\item[(1)] $\qquad \mathfrak{g}_2 : \quad [\xi,e]=-\gamma e +\beta \varphi e, \quad 
[\xi, \varphi e]=\beta e +\gamma \varphi e , \quad  \left[e,\varphi e \right]=2 \xi, \; with \;\; \gamma \neq 0.$

\end{itemize}%
In this case, $G$ { is either the identity component of $O(1,2)$, or}  $\widetilde{SL}(2,\mathbb R)$.
\begin{itemize}
\item[(2)] $\qquad \mathfrak{g}_3 : \quad \left[\xi ,e  \right]=-\gamma \varphi  e, \quad 
\left[ \xi, \varphi e\right]=-\beta e, \quad \left[e,\varphi e\right]=2 \xi .$
\end{itemize}
In this case, $G$ is 
\begin{itemize} 
\item[(2a)] the identity component of $O(1,2)$ or $\widetilde{SL}(2,\mathbb R)$ if either $\beta,\gamma>0$ or $\beta,\gamma<0$;
\item[(2b)] $\widetilde{E}(2)$  if either $\beta>0=\gamma$ or $\beta=0>\gamma$;
\item[(2c)] $E(1,1)$ if either $\beta<0=\gamma$ or $\beta=0<\gamma$;
\item[(2d)] either $SO(3)$ or $SU(2)$ if $\beta>0$ and $\gamma <0$;
\item[(2e)] the Heisenberg group $H_3$ if $\beta=\gamma =0$.
\end{itemize}
\begin{itemize}
\item[(3)] $\quad \mathfrak{g}_4 : \; \left[\xi ,e \right]=- e + (2 \varepsilon - \beta) \varphi e, \quad \left[ \xi ,\varphi e \right]=-\beta e +\varphi e, \quad
\left[e,\varphi e\right]=2 \xi , \; with \; \varepsilon=\pm 1.$
\end{itemize}
In this case, $G$ is
\begin{itemize} 
\item[(3a)] the identity component of $O(1,2)$ or $\widetilde{SL}(2,\mathbb R)$ if $\beta \neq \varepsilon$;
\item[(3b)] $\widetilde{E}(2)$ if $\beta=\varepsilon=1$;
\item[(3c)] $E(1,1)$ if $\beta=\varepsilon=-1$.
\end{itemize}   
\medskip\noindent 
(ii) if $G$ is non-unimodular, then Lie algebra of $G$ is one of the following:  
\begin{itemize}
\item[(4)] $\qquad \mathfrak{g}_5, \mathfrak{g}_6 : \quad  \left[\xi ,e \right]=\left[ \xi ,\varphi e \right]=0, \quad \left[e, \varphi e \right]=2 \xi +\delta e, \; with \; \delta \neq 0.$
\item[(5)] $\qquad \mathfrak{g}_7 : \quad  [\xi ,e]=-[\xi,\varphi e]=-\beta (e + \varphi e),  \quad [e, \varphi e]=2 \xi+\delta (e + \varphi e), \; with \; \delta \neq 0.$
\end{itemize}
\end{theorem}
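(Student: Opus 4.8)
\noindent Since this theorem is quoted from \cite{CIll}, I only indicate how I would approach its proof. The plan is to split the argument into two parts: first, to show that every simply connected complete homogeneous paracontact metric three-manifold is a Lie group carrying a left-invariant paracontact metric structure; second, to classify the three-dimensional metric Lie algebras that admit such a structure.

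For the first part I would argue as in the contact Riemannian case treated in \cite{Pe}. The group $\mathcal{I}$ of isometries preserving the whole paracontact structure acts transitively, and since $\eta$ --- and hence its Reeb field $\xi$ --- is $\mathcal{I}$-invariant, the isotropy subgroup is at most one-dimensional. A dimension count then yields a three-dimensional subgroup $G\subseteq\mathcal{I}$ acting simply transitively, except in finitely many exceptional configurations that must be examined separately and either ruled out or shown directly to admit a simply transitive action. This reduces everything to a Lie group $G$ equipped with a left-invariant metric of signature $(2,1)$ (read off from a $\varphi$-basis $\{\xi,e,\varphi e\}$, with $\xi,e$ space-like and $\varphi e$ time-like) and a left-invariant paracontact metric structure.

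For the second part I would fix a $\varphi$-basis $\{\xi,e,\varphi e\}$ and exploit the structural identities \eqref{eq1} together with $d\eta(X,Y)=g(X,\varphi Y)$. The relation for $d\eta$ forces $\eta([e,\varphi e])$ to be a nonzero constant, equal to $2$ in the normalization of the statement, so that $[e,\varphi e]=2\xi+u$ with $u\in\ker\eta$. The brackets $[\xi,e]$ and $[\xi,\varphi e]$ are governed by $h=\frac{1}{2}\mathcal{L}_\xi\varphi$, which is $g$-symmetric, traceless and anticommutes with the paracomplex structure $\varphi|_{\ker\eta}$; the possible normal forms of such an operator on a neutral two-plane are $h=0$, $h$ diagonalizable with real eigenvalues $\pm\lambda$, $h$ with purely imaginary eigenvalues $\pm i\lambda$, and $h$ two-step nilpotent with $h^2=0\neq h$. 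The nilpotent possibility has no contact Riemannian analogue, and it is exactly what produces the families $\mathfrak{g}_4$ and $\mathfrak{g}_7$ (while the imaginary and real cases yield $\mathfrak{g}_2$ and $\mathfrak{g}_3$, and $h=0$ yields the $K$-paracontact subcases, including $\mathfrak{g}_5,\mathfrak{g}_6$). I would then split into the unimodular and non-unimodular cases: in the unimodular one a Milnor-type normal form for the structure constants, adapted to the $\varphi$-basis, reduces the problem to finitely many one- and two-parameter families; in the non-unimodular one the distinguished linear form $X\mapsto\textrm{tr}(\textrm{ad}\,X)$ together with $\nabla\xi=-\varphi+\varphi h$ plays the same role. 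Normalizing by the automorphisms that fix $\xi$ finally yields exactly $\mathfrak{g}_2,\dots,\mathfrak{g}_7$.

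I expect the main obstacles to be twofold. The reduction to Lie groups is more delicate than in the Riemannian case, where standard structure results for homogeneous three-manifolds are available; here one must control directly the isometry group of a homogeneous semi-Riemannian three-manifold. And once the candidate metric Lie algebras are in hand, one must identify the underlying simply connected Lie group in each subcase --- distinguishing, for instance, $\widetilde{SL}(2,\mathbb{R})$ from the identity component of $O(1,2)$, and among $\widetilde{E}(2)$, $E(1,1)$, $H_3$, $SU(2)$, $SO(3)$ --- by computing the classical invariants (signature of the Killing form, signs of the Milnor constants, unimodularity) and carefully keeping track of the parameter ranges of $\beta,\gamma,\delta,\varepsilon$ and of the sign conventions forced by the neutral metric on $\ker\eta$. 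This last piece of bookkeeping is where I would expect most of the errors to hide.
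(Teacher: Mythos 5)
This theorem is imported verbatim from \cite{CIll} and the present paper gives no proof of it, so there is no in-paper argument to check you against; I can only assess your outline on its own terms. As an outline it is sound and matches the general strategy of \cite{CIll}: reduce to a Lie group with left-invariant structure via the isometry group preserving $(\varphi,\xi,\eta,g)$ (isotropy acts on the Lorentzian plane $\ker\eta$ preserving $\varphi$, hence is at most one-dimensional), then classify the compatible metric Lie algebras. The one substantive difference is in the second half: you propose to derive the normal forms ab initio from the paracontact identities and the Jordan types of $h$ on the neutral plane $\ker\eta$, whereas the actual proof in \cite{CIll} leans on the pre-existing classification of left-invariant Lorentzian metrics on three-dimensional Lie groups from \cite{C} --- the labels $\mathfrak{g}_2,\dots,\mathfrak{g}_7$ are inherited from that classification, as the paper itself points out --- and then imposes the paracontact compatibility conditions on each family. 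Your route is self-contained but duplicates work; the paper's route is shorter but not free-standing. Two cautions on your sketch: the correspondence you draw between Jordan types of $h$ and the families is only approximate ($\mathfrak{g}_7$ with $\beta=0$ and $\mathfrak{g}_3$ with $\beta=\gamma$ both have $h=0$, so the nilpotent and diagonalizable families each degenerate into the $K$-paracontact case on a subvariety), and the extraction of a simply transitive three-dimensional subgroup when the full invariance group is four-dimensional is genuinely more delicate in Lorentzian signature than your one-line dimension count suggests --- that is the step you would have to work hardest to make rigorous.
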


Notations $\mathfrak{g}_2-\mathfrak{g}_7$ for Lie algebras listed in Theorem~\ref{mainTh} refer to the classification of all three-dimensional Lorentzian Lie groups, obtained in \cite{C}.

In the symmetric case, such a paracontact homogeneous three-manifold is either flat or of constant sectional curvature $-1$. These cases are included in the classification given in Theorem~\ref{mainTh} above. In fact, in case {\em (2a)} with $\alpha=\beta=\gamma=2$, unimodular Lie groups $O(1,2)$ or $\widetilde{SL}(2,\mathbb R)$ have constant sectional curvature $-1$, while in case {\em (2b)} with $\alpha=\beta=2$, the unimodular Lie group $\widetilde{E}(2)$ is flat. 

Tensor $h=(1/2)\mathcal{L}_{\xi} \varphi$ of all examples listed in Theorem~{\ref{mainTh}} can be easily deduced from the above Lie brackets. Moreover, the curvature and the Ricci tensor of any left-invariant Lorentzian structure over a three-dimensional Lie group was completely described in \cite{C2}. 
In particular, describing the Ricci operator with respect to the pseudo-orthonormal basis $\{e_1,e_2,e_3\}=\{\xi,e,\varphi e\}$, we get: 

\smallskip
For case {\em (1)}:  
\begin{equation}\label{curv22}
\left\{\begin{array}{l}  h e = \gamma \varphi e, \\[2 pt] h \varphi e = -\gamma e, \end{array}\right.
\qquad Q= \left(
\begin{array}{ccc}
   -2-2\gamma ^2 & 0 & 0 \\
   0  & 2 -2 \beta & \gamma(2 -2\beta) \\
   0  & -\gamma(2 -2\beta)  & 2 -2 \beta 
\end{array}
\right).
\end{equation}

For case {\em (2)}:  
\begin{equation}\label{curv27}
\left\{\begin{array}{l} he=-\frac 12 (\beta-\gamma) e, \\[4 pt] h\varphi e =\frac 12 (\beta-\gamma)\varphi e, \end{array}\right. \qquad Q= \left(
\begin{array}{ccc}
   -2+\frac 12 (\beta-\gamma)^2 & 0 & 0 \\
   0  &  \frac 12 ((2-\gamma)^2-\beta^2)  & 0 \\
   0  & 0 & \frac 12 ((2-\beta)^2-\gamma^2)
\end{array}
\right).
\end{equation}

For case {\em (3)}:  
\begin{equation}\label{curv31}
\left\{\begin{array}{l} he=\varepsilon e+ \varphi e, \\[2 pt] h\varphi e =-e -\varepsilon \varphi e, \end{array}\right. \qquad \quad Q = \left(
\begin{array}{ccc}
   -2  & 0 & 0 \\
   0  &  4+2\eta(2 -\beta)-2 \beta  & 2(1 +\eta -\beta) \\
   0  & -2(1+\eta -\beta) &  -2 \beta+2\eta \beta
\end{array}
\right). 
\end{equation}
%
%

For case {\em (4)}:  
\begin{equation}\label{curv35}
h=0, \qquad Q = \left(
\begin{array}{ccc}
   -2  & 0 & 0 \\
   0  &   \delta ^2 +2   & 0 \\
   0  & 0 &  \delta ^2 + 2
\end{array}
\right).
\end{equation}
%

For case {\em (5)}:  
\begin{equation}\label{curv43}
\left\{\begin{array}{l}  he=\beta(e+ \varphi e), \\[2 pt] h\varphi e =-\beta(e +\varphi e), \end{array}\right. \qquad Q= \left(
\begin{array}{ccc}
   -2  & 0 & 0 \\
   0 &  2-2\beta  & 2\beta \\
   0  & -2\beta & 2+2\beta 
\end{array}
\right).
\end{equation}
Thus,  in all the above cases, $\xi=e_1$ is a Ricci eigenvector. Hence, by Theorem~\ref{main1}, $\xi$ is harmonic. Indeed, we can prove the following stronger result.  

\begin{theorem}\label{3Dharm}
The Reeb vector field of any three-dimensional homogeneous paracontact metric manifold defines a harmonic map into the unit tangent sphere bundle. In particular, all three-dimensional homogeneous paracontact metric manifolds are $H$-para\-contact. 
\end{theorem}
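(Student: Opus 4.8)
The plan is to reduce the statement to results already available and then to a finite computation over the classification of Theorem~\ref{mainTh}. By Theorem~\ref{main1}, a paracontact metric manifold is $H$-paracontact exactly when $\xi$ is a Ricci eigenvector, and for every family $\mathfrak{g}_2$--$\mathfrak{g}_7$ of Theorem~\ref{mainTh} the Ricci operators displayed in \eqref{curv22}--\eqref{curv43} have $\xi=e_1$ as an eigenvector. Hence every three-dimensional homogeneous paracontact metric manifold is $H$-paracontact, i.e.\ $\bar{\Delta}\xi$ is collinear with $\xi$ (see \eqref{Cd3}). By the criterion recalled in Section~2, it then only remains to verify the extra condition \eqref{Cd1}, namely $\textrm{tr}[R(\nabla_{\cdot}\xi,\xi)\cdot]=0$, in each case.

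For the family $\mathfrak{g}_5,\mathfrak{g}_6$ (case~(4)) one has $h=0$, so the manifold is $K$-paracontact and Theorem~\ref{teo.1} already gives that $\xi$ defines a harmonic map into $T_1M$. For the remaining families I would exploit the fact that, in dimension three, the curvature tensor is completely determined by the Ricci operator and the scalar curvature. Substituting this expression for $R$ into $\textrm{tr}[R(\nabla_{\cdot}\xi,\xi)\cdot]$, and using $\nabla\xi=-\varphi+\varphi h$ from \eqref{eq1}, the relations $\textrm{tr}\,\varphi=\textrm{tr}(\varphi h)=0$, the fact that $\varphi$ is $g$-skew while $Q$ is $g$-self-adjoint (so that $\textrm{tr}(Q\varphi)=0$), and the fact that $\xi$ is a unit Ricci eigenvector (so that $Q$ preserves the splitting $\mathbb R\xi\oplus\ker\eta$), almost all terms cancel and the computation collapses to $\textrm{tr}[R(\nabla_{\cdot}\xi,\xi)\cdot]=\textrm{tr}(Q\varphi h)\,\xi$. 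The same reduction can alternatively be carried out, as in the proofs of Theorems~\ref{teo.1} and \ref{teo3.4}, through the first Bianchi identity and \eqref{eq3.3}, rewriting the trace in terms of the vectors $h\big(R(e,\varphi e)\xi\big)$.

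It then remains to evaluate $\textrm{tr}(Q\varphi h)$ (equivalently, to insert the explicit left-invariant curvature of each Lie group from \cite{C2}) for the families $\mathfrak{g}_2$, $\mathfrak{g}_3$, $\mathfrak{g}_4$ and $\mathfrak{g}_7$. With the tensors $h$ and the Ricci operators recorded in \eqref{curv22}, \eqref{curv27}, \eqref{curv31} and \eqref{curv43}, together with $\varphi^2=\textrm{Id}$ on $\ker\eta$, each case reduces to a short matrix computation in the two-dimensional neutral plane $\ker\eta$, which yields $0$; the families $\mathfrak{g}_4$ and $\mathfrak{g}_7$, where $h\neq0$ but $h^2=0$, are particularly transparent. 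Combining this with the harmonicity of $\xi$ as a vector field then gives, by \eqref{Cd1}, that $\xi:(M,g)\to(T_1M,g^s)$ is a harmonic map in each case, and in particular that every three-dimensional homogeneous paracontact metric manifold is $H$-paracontact.

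I expect the only real obstacle to be computational bookkeeping: reconciling the sign convention for the curvature when importing the formulae of \cite{C2} into the convention adopted here, and handling the families $\mathfrak{g}_4$ and $\mathfrak{g}_7$ without diagonalizing $h$, which there is a non-trivial nilpotent operator. Conceptually, once the reduction of the previous paragraph is in place, the whole argument is a case-by-case check controlled entirely by the structure constants $\beta,\gamma,\delta,\varepsilon$.
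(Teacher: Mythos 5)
Your proposal is correct, and its skeleton coincides with the paper's: read off from \eqref{curv22}--\eqref{curv43} that $\xi=e_1$ is a Ricci eigenvector (hence $H$-paracontact by Theorem~\ref{main1}), dispose of case {\em (4)} via $h=0$ and Theorem~\ref{teo.1}, and then verify \eqref{Cd1} case by case. Where you genuinely diverge is in how you handle \eqref{Cd1}. The paper writes, via \eqref{tr3D}, the trace as a combination of the four components $R(\xi,e)e$, $R(\xi,e)\varphi e$, $R(\xi,\varphi e)e$, $R(\xi,\varphi e)\varphi e$, and then computes these explicitly for each Lie algebra from the structure constants. You instead insert the three-dimensional curvature decomposition $R(X,Y)Z= g(Y,Z)QX-g(X,Z)QY+\varrho(Y,Z)X-\varrho(X,Z)Y-\tfrac{r}{2}\bigl(g(Y,Z)X-g(X,Z)Y\bigr)$ (up to the paper's sign convention) and use $\nabla\xi=-\varphi+\varphi h$, ${\rm tr}\,\varphi={\rm tr}(\varphi h)=0$, ${\rm tr}(Q\varphi)=0$ and $Q\xi=\lambda\xi$ to collapse the trace to $\pm\,{\rm tr}(Q\varphi h)\,\xi$; I checked this reduction and it is valid, the only surviving term being $-{\rm tr}(Q\,\nabla\xi)\,\xi$. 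This buys you something real: the harmonic-map condition for any three-dimensional $H$-paracontact manifold becomes the single scalar identity ${\rm tr}(Q\varphi h)=0$, checkable directly from the tensors $h$ and $Q$ already displayed in \eqref{curv22}--\eqref{curv43} (note that $\varphi h$ is $g$-symmetric, so this trace is not automatically zero and the case-by-case check is genuinely needed), with no further curvature components required. The price is that your criterion is special to dimension three, whereas the paper's method via \eqref{tr3D} is the one that generalizes (cf.\ Theorems~\ref{teo.1} and \ref{teo3.4}). Your anticipated obstacles are the right ones; in particular, be careful with the placement of the off-diagonal Ricci entries (which convention, rows versus columns, is fixed by self-adjointness of $Q$ with respect to the neutral metric on $\ker\eta$), since with the wrong reading the trace in cases {\em (1)}, {\em (3)} and {\em (5)} does not vanish.
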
 
  
\begin{proof}
We already concluded by \eqref{curv22}-\eqref{curv43} that $\xi=e_1$ is a Ricci eigenvector. So, all the above examples are $H$-paracontact, and it suffices to check the additional condition ${\rm tr} [R(\nabla _{\cdot} \xi,\xi)\cdot ] =0$. To note that in case {\em (4)}, $h=0$ and the conclusion follows from Theorem~\ref{teo.1}.

Consider a three-dimensional paracontact metric manifold $(M,\varphi,\xi,\eta,g)$ and a local $\varphi$-basis $\{\xi,e,\varphi e\}$. Taking into account $h\varphi=-\varphi h$ and the first equation in \eqref{eq1}, one has
\begin{align}
{\rm tr} [R(\nabla _{\cdot} \xi,\xi)\cdot ] &= R(\nabla _e \xi, \xi)e -R(\nabla _{\varphi e} \xi, \xi)\varphi e \label{tr3D} \\[4 pt] 
&= R(\xi, \varphi e)e -R(\xi, \varphi h e ) e - R(\xi, e)\varphi e -R(\xi, h e)\varphi e . \nonumber
\end{align}
The curvature tensor of three-dimensional left-invariant paracontact metric structures listed in Theorem~{\ref{mainTh}} can be deduced either by direct calculation, or by comparison with the more general formulae obtained in \cite{C2} for the curvature of three-dimensional Lorentzian Lie groups. 

For case {\em (1)}, we find 
  
$$
\begin{array}{ll}  
R(\xi ,e) e = -(2+\gamma^2) \xi, & \quad  R(\xi ,e) \varphi e = 2\gamma (1+\beta) \xi, \\[4 pt] 
R(\xi ,\varphi e) \varphi e = (2+\gamma^2 )\xi , & \quad R(\xi ,\varphi e) e = 2\gamma (1+\beta)\xi . \end{array}
$$
Then, taking into account the description of $h$ given in \eqref{curv22}, from \eqref{tr3D} we get
\begin{align*} {\rm tr} [R(\nabla _{\cdot} \xi,\xi)\cdot ] &= R(\xi, \varphi e)e -\gamma R(\xi, e ) e - R(\xi, e)\varphi e -\gamma R(\xi, \varphi e)\varphi e \\[4 pt] &=
2\gamma (1+\beta)\xi +\gamma (2+\gamma^2)\xi-2\gamma (1+\beta)\xi -\gamma (2+\gamma^2)\xi \\&=0.
\end{align*}
The calculations for the remaining cases are similar to the above one. It suffices to apply \eqref{tr3D}, using the description of tensor $h$ given in equations \eqref{curv27},\eqref{curv31} and \eqref{curv43}, and 
the following curvature equations:
$$
\begin{array}{lll}  \text{For case {\em (2)}:} & \quad 
R(\xi ,e) e = \frac 14 (4\beta-\beta^2 -4+3\gamma ^2-4\gamma-2\beta\gamma) \xi, &  R(\xi , e) \varphi e = 0, \\[4 pt] & \quad
R(\xi ,\varphi e) \varphi e = \frac 14 (4-4\gamma+\gamma^2 -3\beta ^2+4\beta+2\beta\gamma) \xi , &  R(\xi ,\varphi e) e = 0. \\[8 pt]
  \text{For case {\em (3)}:} & \quad 
R(\xi ,e) e = (1+2\varepsilon -2\varepsilon  \beta ) \xi, &  R(\xi , e) \varphi e = 2(1+\varepsilon -\beta)\xi , \\[4 pt] & \quad
R(\xi ,\varphi e) \varphi e = (3+2\varepsilon  -2\varepsilon \beta ) \xi , &  R(\xi ,\varphi e) e = 2(1+\varepsilon -\beta)\xi . \\[8 pt]
\text{For case {\em (5)}:} & \quad 
R(\xi ,e) e = - (1+2\beta) \xi, &  R(\xi , e) \varphi e = 2\beta \xi, \\[4 pt] & \quad
 R(\xi ,\varphi e) \varphi e = (1-2\beta ) \xi , &  R(\xi ,\varphi e) e = 2\beta \xi. \end{array}
$$
In all the above cases, a straightforward calculation yields ${\rm tr} [R(\nabla _{\cdot} \xi,\xi)\cdot ]=0$. So, $\xi$ defines a harmonic map into the unit tangent sphere bundle.
\end{proof}

{ We end this Section by pointing out the following

\begin{example}{\bf A nonSasakian paracontact metric manifold satisfying \eqref{eq4}.} \\
{\em Consider the three-dimensional left-invariant paracontact metric structure listed in case {\em (3a)} of Theorem~\ref{mainTh} in the special case when $\beta=\varepsilon+1$, that is, 
$$ \left[\xi ,e \right]=- e + (\varepsilon - 1) \varphi e, \qquad \left[ \xi ,\varphi e \right]=-(\varepsilon +1) e +\varphi e, \qquad \left[e,\varphi e\right]=2 \xi , \; \text{with} \; \varepsilon=\pm 1.
$$
We already proved in equation~\eqref{curv31} that $he=\varepsilon e+ \varphi e$ and 
$h\varphi e =-e -\varepsilon \varphi e$. Therefore, $h \neq 0$ and so, this paracontact metric structure is not paraSasakian. On the other hand, calculating the curvature tensor (or equivalently, using the formulas proved in \cite{C2} for the curvature), we easily get 
$$\begin{array}{l}
R(e, \varphi e)\xi=0, \\[4 pt]
R(\xi,  e)\xi= -(2\varepsilon \beta -1-2\varepsilon )e-2(1+\varepsilon -\beta)\varphi e = -e=-\eta(\xi)e, \\[4 pt]
R(\xi, \varphi e)\xi=2(1+\varepsilon -\beta)e -(3+2\varepsilon -2\varepsilon \beta)\varphi e=-\varphi e =\eta(\xi) \varphi e,
\end{array}$$
from which it follows at once that equation~\eqref{eq4} holds, since $R$ and $\eta$ are tensors. Thus, this paracontact metric manifold is an explicit example of a paracontact non-paraSasakian metric manifold, satisfying \eqref{eq4}.     
}\end{example}
}

\section{\rid Paracontact infinitesimal harmonic transformations and Ricci \\ solitons}
\setcounter{equation}{0}

Let $(M^n,g)$ be a semi-Riemannian manifold and $f : x \mapsto x'$ a point transformation in $(M, g)$. If $\nabla (x)$ denotes the Levi-Civita connection at $x$ and $\nabla '(x)$ is obtained bringing back $\nabla (x')$ to $x$ by $f^{-1}$ \cite{SS2}, the {\em Lie difference} at $x$ is defined as $\nabla'(x)-\nabla(x)$. The map $f$ is said to be {\em harmonic} if ${\rm tr}(\nabla'(x)-\nabla(x))=0$.

Consider now  a vector field $V$ on $M$ and the local one-parameter group of infinitesimal point transformations $f_t$ generated by $V$.  The Lie derivative $L_{V} \nabla$ then corresponds to $\nabla'(x)-\nabla(x)$, where $\nabla'(x)=f^* _t (\nabla(x'))$, and $V$ generates a group of harmonic transformations if and only if $${\rm tr}(L_V \nabla)=0.$$ In this case, $V$ is said to be an {\em infinitesimal harmonic transformation} {\cite{No1},\cite{SS1}}.

Infinitesimal harmonic transformations also occur as critical points for a suitable energy functional. In fact, if $g^c$ denotes the {\em complete lift metric} of $g$ to $TM$,  which is of neutral signature $(n,n)$, a vector field $V$ on $M$ defines a harmonic section $V: (M, g) \to (TM, g^c)$ if and only if $V$ is an infinitesimal harmonic transformation \cite{No1}. For this reason, infinitesimal harmonic transformations are also called {\em $1$-harmonic vector fields}, because this harmonicity property is equivalent to the vanishing of the linear part of the tension field of the local one-parameter group of infinitesimal point transformations \cite{DTV}. 
A vector field $V$ is an infinitesimal harmonic transformation if and only if $\bar \Delta V = Q V$ (see for example \cite{CP},\cite{CSVV}). 

We now consider a paracontact metric manifold  $(M,\varphi,\xi,\eta,g)$. By Theorem~\ref{main2} and equation $\varrho(\xi,\xi)=-2n+{\rm tr} h^2$, we get
\begin{align*}
	\bar\Delta \xi= Q\xi \quad  \Longleftrightarrow \quad Q\xi=-2n \xi \quad \Longleftrightarrow \quad  {\rm tr} h^2=0 \quad \textrm{and} \quad Q\xi \;\, \textrm{is collinear to} \,  \xi .
\end{align*}
%
Thus, we have the following result.
  
\begin{theorem}\label{6} 
 Let $(M,\eta ,\xi ,g,\varphi)$ be a paracontact metric manifold. Then, the following assertions are equivalent:
 
\medskip
 1) \, $Q\xi  = -2n \xi$; 

\smallskip 2) \,  $\xi$ is an infinitesimal harmonic transformation (equivalently, $1$-harmonic);

\smallskip 3) \, $M$ is $H$-paracontact and   {\rm tr}$h^2=0$.
\end{theorem}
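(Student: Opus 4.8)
The plan is to derive Theorem~\ref{6} directly from Theorem~\ref{main2} together with the Ricci identity $\varrho(\xi,\xi)=-2n+{\rm tr}\,h^2$ recorded in \eqref{eq3}, and from the characterizations of the two harmonicity notions already recalled in the excerpt, namely that $\xi$ is $1$-harmonic (an infinitesimal harmonic transformation) if and only if $\bar\Delta\xi=Q\xi$, and that $\xi$ is a harmonic vector field if and only if $\bar\Delta\xi$ is collinear to $\xi$ (condition \eqref{Cd3}). The key computational input is the first line of \eqref{eq3.1}, which gives $\bar\Delta\xi=-4n\xi-Q\xi$ on the nose.

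The main steps, in order, would be as follows. First, I would prove $2)\Rightarrow 1)$: substituting $\bar\Delta\xi=-4n\xi-Q\xi$ into the defining equation $\bar\Delta\xi=Q\xi$ gives $-4n\xi-Q\xi=Q\xi$, i.e. $2Q\xi=-4n\xi$, hence $Q\xi=-2n\xi$. Conversely, for $1)\Rightarrow 2)$, if $Q\xi=-2n\xi$ then $\bar\Delta\xi=-4n\xi-Q\xi=-4n\xi+2n\xi=-2n\xi=Q\xi$, so $\xi$ is $1$-harmonic. This already establishes the equivalence of $1)$ and $2)$. Next, for the equivalence with $3)$: if $Q\xi=-2n\xi$, then in particular $Q\xi$ is collinear to $\xi$, so by the second formula in \eqref{eq3.1} (or directly by $\bar\Delta\xi=-2n\xi$ as just computed) condition \eqref{Cd3} holds and $M$ is $H$-paracontact; moreover, pairing $Q\xi=-2n\xi$ with $\xi$ and using \eqref{eq3} gives $-2n=\varrho(\xi,\xi)=-2n+{\rm tr}\,h^2$, whence ${\rm tr}\,h^2=0$. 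Conversely, assume $M$ is $H$-paracontact and ${\rm tr}\,h^2=0$. By Theorem~\ref{main1}, $H$-paracontact means $Q\xi=\lambda\xi$ for some function $\lambda$; pairing with $\xi$ and using \eqref{eq3} with ${\rm tr}\,h^2=0$ yields $\lambda=\varrho(\xi,\xi)=-2n$, so $Q\xi=-2n\xi$, which is $1)$.

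There is essentially no obstacle here: every ingredient — Theorem~\ref{main2}, the identity \eqref{eq3}, the characterization $\bar\Delta V=QV$ of $1$-harmonicity, and the characterization \eqref{Cd3} of harmonic vector fields — is already available in the excerpt, and the argument is a short chain of algebraic substitutions. The only point that requires a little care is the direction $3)\Rightarrow 1)$, where one must invoke Theorem~\ref{main1} to know that $H$-paracontact forces $Q\xi$ to be a \emph{multiple} of $\xi$ (not merely that $\bar\Delta\xi$ is), and then pin down the eigenvalue via the trace identity; the hypothesis ${\rm tr}\,h^2=0$ is exactly what is needed to make that eigenvalue equal $-2n$. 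I would present the whole proof as the two displayed equivalences
\begin{align*}
\bar\Delta\xi=Q\xi \;\Longleftrightarrow\; Q\xi=-2n\xi \;\Longleftrightarrow\; \big(\,\bar\Delta\xi \text{ collinear to } \xi \ \text{ and }\ {\rm tr}\,h^2=0\,\big),
\end{align*}
each implication being one line, followed by the translation of the three conditions into the statements $1)$, $2)$, $3)$ via the characterizations recalled above and Theorem~\ref{main1}.
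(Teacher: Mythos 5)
Your proposal is correct and follows essentially the same route as the paper: the paper also derives the chain of equivalences $\bar\Delta\xi=Q\xi \Leftrightarrow Q\xi=-2n\xi \Leftrightarrow (\textrm{tr}\,h^2=0$ and $Q\xi$ collinear to $\xi)$ directly from Theorem~\ref{main2} and the identity $\varrho(\xi,\xi)=-2n+\textrm{tr}\,h^2$, invoking Theorem~\ref{main1} and the characterization $\bar\Delta V=QV$ of infinitesimal harmonic transformations exactly as you do. Your write-up simply makes explicit the one-line substitutions that the paper leaves to the reader.
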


\begin{remark}
{\em In general, a harmonic vector field needs not to be $1$-harmonic, nor conversely. This fact may be easily seen, for example, comparing the classifications of harmonic and $1$-harmonic left-invariant vector fields over three-dimen\-sional Lorentzian Lie algebras, given respectively in \cite{C3} and \cite{CSVV}. 

However, the above Theorem~\ref{6} yields that if the Reeb vector field of a paracontact metric manifold is $1$-harmonic, then it is harmonic, while the converse does not hold, because of the additional condition {\rm tr}$h^2=0$.}
\end{remark}

{

We already proved in Corollary~\ref{cor3.4} that any paracontact $(\kappa,\mu)$-space is $H$-paracontact. On the other hand, for a paracontact $(\kappa,\mu)$-space  one has $h^2=(k+1)\varphi^2$ (see for example \cite{CKM}), from which it easily follows that tr$h^2=0$ if and only if $k=-1$. Hence, by the above Theorem~\ref{6}, we have the following 

\begin{corollary}
The Reeb vector field of a paracontact $(\kappa,\mu)$-space is an infinitesimal harmonic transformation if and only if $\kappa=-1$. Whenever $\kappa \neq -1$, the Reeb vector field of a paracontact $(\kappa,\mu)$-space is harmonic but not $1$-harmonic.
\end{corollary}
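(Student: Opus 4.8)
The statement to be proved is the Corollary characterizing when the Reeb vector field of a paracontact $(\kappa,\mu)$-space is an infinitesimal harmonic transformation. The plan is to combine Theorem~\ref{6} with the explicit form of the tensor $h$ for a paracontact $(\kappa,\mu)$-space. The key input from $(\kappa,\mu)$-geometry is the identity $h^2 = (\kappa+1)\varphi^2$, which we take from the literature (\cite{CKM}); everything else is bookkeeping.

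First I would recall that $\varphi^2 = I - \eta\otimes\xi$ by the defining relations of a paracontact metric structure. Hence, with respect to any $\varphi$-basis $\{\xi, e_1,\dots,e_n,\varphi e_1,\dots,\varphi e_n\}$, the operator $\varphi^2$ acts as the identity on $\ker\eta$ and annihilates $\xi$, so it has rank $2n$ and its trace (computed with the signs $\varepsilon_i$ of a pseudo-orthonormal basis) is $\sum_{i=1}^{2n}\varepsilon_i = 0$, since the $\varphi$-basis contains $n$ space-like and $n$ time-like vectors in $\ker\eta$. More precisely, $\mathrm{tr}(\varphi^2) = \sum_i \varepsilon_i g(\varphi^2 E_i, E_i)$, and on $\ker\eta$ each term equals $\varepsilon_i g(E_i,E_i) = \varepsilon_i^2 = 1$, giving $\mathrm{tr}(\varphi^2) = 2n - 0 = 2n$ if one counts unsigned, but with the signed trace one must be a little careful; the clean statement I would actually use is that $\mathrm{tr}\,h^2 = (\kappa+1)\,\mathrm{tr}(\varphi^2)$ and that $\mathrm{tr}(\varphi^2) \neq 0$, so that $\mathrm{tr}\,h^2 = 0 \iff \kappa = -1$. (The precise nonzero value of $\mathrm{tr}(\varphi^2)$ is immaterial; only its non-vanishing matters.)

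Then I would invoke Corollary~\ref{cor3.4}, or directly Theorem~\ref{main1} together with the fact that $Q\xi = 2n\kappa\,\xi$ for a paracontact $(\kappa,\mu)$-space, to conclude that any such space is $H$-paracontact, i.e.\ $\xi$ is a harmonic vector field regardless of the value of $\kappa$. Finally, applying Theorem~\ref{6}: the Reeb field is $1$-harmonic (an infinitesimal harmonic transformation) if and only if $M$ is $H$-paracontact \emph{and} $\mathrm{tr}\,h^2 = 0$. Since the first condition always holds here, this reduces precisely to $\mathrm{tr}\,h^2 = 0$, which by the previous paragraph is equivalent to $\kappa = -1$. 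This proves the first sentence of the Corollary; the second sentence is then the immediate contrapositive, using that harmonicity of $\xi$ holds unconditionally.

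The only step requiring genuine care — as opposed to citation — is the claim that $\mathrm{tr}\,h^2 = 0$ forces $\kappa = -1$, i.e.\ the non-vanishing of $\mathrm{tr}(\varphi^2)$ in the relevant (signed) sense; this is where one must be attentive to the semi-Riemannian signs, since in other contexts a ``trace'' of a rank-$2n$ operator could conceivably vanish. Here, however, $\varphi^2$ restricted to $\ker\eta$ is the identity, so its trace over $\ker\eta$ is $\sum_{i=1}^{2n}\varepsilon_i g(E_i,E_i) = \sum_{i=1}^{2n}\varepsilon_i^2 = 2n > 0$, which settles the matter. I would present this as a one-line remark rather than a separate lemma.
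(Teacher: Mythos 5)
Your proposal is correct and follows essentially the same route as the paper: $(\kappa,\mu)$-spaces are $H$-paracontact since $Q\xi=2n\kappa\xi$, the identity $h^2=(\kappa+1)\varphi^2$ gives $\mathrm{tr}\,h^2=2n(\kappa+1)$, and Theorem~\ref{6} then yields the equivalence with $\kappa=-1$. Your momentary hesitation about the signed trace resolves itself correctly ($\mathrm{tr}\,\varphi^2=2n$ since each basis term contributes $\varepsilon_i^2=1$), so nothing is missing.
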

}

Next, using the description of tensor $h$ given in equations \eqref{curv22}-\eqref{curv43}, we can easily deduce tr$h^2$ for all three-dimensional left-invariant paracontact metric structures classified in Theorem~\ref{mainTh}. Taking into account Theorems~\ref{6} and \ref{3Dharm}, we then get the following result.

\begin{corollary}\label{3Dinf}
The Reeb vector field of a three-dimensional homogeneous paracontact metric manifold is an infinitesimal harmonic transformation if and only if the manifold is isometric to one of the following cases, as classified in Theorem~{\em\ref{mainTh}}: 
\begin{itemize}
\item case {\it (2)} with $\beta=\gamma$; 
\vspace{-2mm}\item case {\it (3)};
\vspace{-2mm}\item case {\it (4)}; 
\vspace{-2mm}\item case {\it (5)}. 
\end{itemize}
\end{corollary}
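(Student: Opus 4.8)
The plan is to invoke Theorem~\ref{6}, which reduces the question ``is $\xi$ an infinitesimal harmonic transformation?'' to the single numerical condition $\mathrm{tr}\,h^2 = 0$, since Theorem~\ref{3Dharm} has already established that every three-dimensional homogeneous paracontact metric manifold is $H$-paracontact, so condition (3) of Theorem~\ref{6} collapses to $\mathrm{tr}\,h^2=0$. Thus the corollary becomes a finite bookkeeping task: go through cases (1)--(5) of the classification Theorem~\ref{mainTh}, read off the tensor $h$ from equations \eqref{curv22}--\eqref{curv43}, compute $\mathrm{tr}\,h^2$ with respect to the $\varphi$-basis $\{e_1,e_2,e_3\}=\{\xi,e,\varphi e\}$ (remembering the signature $g(e,e)=1$, $g(\varphi e,\varphi e)=-1$), and determine exactly when it vanishes.

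Concretely, I would organize the computation case by case. For case (1), \eqref{curv22} gives $he=\gamma\varphi e$, $h\varphi e=-\gamma e$, so $h^2 e = -\gamma^2 e$ and $\mathrm{tr}\,h^2$ turns out to be a nonzero multiple of $\gamma^2$; since $\gamma\neq 0$ is imposed in case (1), this never vanishes, and case (1) is excluded. For case (2), \eqref{curv27} gives $he=-\tfrac12(\beta-\gamma)e$, $h\varphi e=\tfrac12(\beta-\gamma)\varphi e$, so $\mathrm{tr}\,h^2$ is proportional to $(\beta-\gamma)^2$, which vanishes precisely when $\beta=\gamma$. For case (3), \eqref{curv31} gives $he=\varepsilon e+\varphi e$, $h\varphi e=-e-\varepsilon\varphi e$; a short computation of $h^2$ shows $h^2=0$ (indeed $h^2 e = (\varepsilon^2-1)e + \cdots = 0$ using $\varepsilon^2=1$), so $\mathrm{tr}\,h^2=0$ identically and case (3) is always included --- this is also where the ``$h\neq 0$ but $h^2=0$'' examples promised earlier live. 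For case (4), \eqref{curv35} has $h=0$, so trivially $\mathrm{tr}\,h^2=0$. For case (5), \eqref{curv43} gives $he=\beta(e+\varphi e)$, $h\varphi e=-\beta(e+\varphi e)$, and again $h^2=0$ (since $(e+\varphi e)$ is mapped into itself up to the factor $\beta - \beta = 0$ after one more application), so $\mathrm{tr}\,h^2=0$ identically. Assembling these: the condition $\mathrm{tr}\,h^2=0$ holds exactly in case (2) with $\beta=\gamma$, all of case (3), all of case (4), and all of case (5), which is precisely the list in the statement.

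The only mildly delicate point is computing $h^2$ correctly in cases (3) and (5), where $h$ is not diagonalizable: one must be careful that although $h\neq 0$, the image of $h$ is a null line on which $h$ acts as zero, so $h^2=0$ and a fortiori $\mathrm{tr}\,h^2=0$. This is exactly the phenomenon flagged in the Remark after Theorem~\ref{teo.1} --- $\mathrm{tr}\,h^2=0$ does not force $h=0$ in the paracontact setting --- so no contradiction with these structures being genuinely non-$K$-paracontact arises. I expect this to be the main (and essentially only) obstacle; everything else is a direct substitution into Theorem~\ref{6}. One should also double-check the sign conventions in the trace: with a pseudo-orthonormal $\varphi$-basis one has $\mathrm{tr}\,h^2=\sum_i\varepsilon_i g(h^2 E_i,E_i)$, but since $h\xi=0$ only the $e$ and $\varphi e$ terms contribute and the bookkeeping is routine. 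Hence the corollary follows.
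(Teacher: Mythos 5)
Your proposal is correct and follows exactly the paper's own route: combine Theorem~\ref{6} with Theorem~\ref{3Dharm} to reduce the question to the vanishing of $\mathrm{tr}\,h^2$, then read off $h$ from \eqref{curv22}--\eqref{curv43} case by case (and your computations, including $h^2=0$ with $h\neq 0$ in cases (3) and (5), are accurate). Nothing to add.
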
 

\noindent
{ The above Corollary~\ref{3Dinf} is compatible with the results about left-invariant Killing and $1$-harmonic vector fields on three-dimensional Lorentzian Lie groups obtained in \cite{CSVV} (see, in particular, Lemma~1, Theorem~6, Lemma~11 and Theorem~20 in \cite{CSVV}).}

\begin{remark}
{\em In the contact Riemannian case, the Reeb vector field is an infinitesimal harmonic transformation if and only if the contact Riemannian structure is $K$-contact \cite{Pe1}. 

Again by the description of tensor $h$ given in the previous Section, it is easily seen that $h=0$ (and so, the three-dimensional left-invariant paracontact metric structure is paraSasakian, see Theorem~2.2 in \cite{CIll}) if and only if we are either in case {\em (2)} with $\beta=\gamma$, in case {\em (3)}, or in case {\em (5)} with $\beta=0$. This corrects Theorem~4.3 in \cite{CIll}, as case {\em (a)} is not paraSasakian. \\
Comparing this classification with the above Corollary~\ref{3Dinf}, we see that in the following cases
\begin{itemize}
\vspace{-2mm}\item case {\em (3)};
\vspace{-2mm}\item case {\em (5)} with $\beta \neq 0$, 
\end{itemize}
$\xi$ is an infinitesimal harmonic deformation, although the paracontact metric structure is not $K$-paracontact. Thus, {\em  the class of paracontact metric structures, whose Reeb vector field is an infinitesimal harmonic transformation, is strictly larger than the one of $K$-paracontact structures}. 

{\smallskip
We can also exhibit a five-dimensional example of a paracontact, not $K$-paracontact metric manifold, whose characteristic vector field $\xi$ is an infinitesimal harmonic transformation. Consider the simply connected Lie group, whose Lie algebra $\mathfrak{g}={\rm Span}\{ \xi,X_1,X_2,Y_1,Y_2\}$ is described by
$$
\begin{array}{lll}
[X_1,X_2]=2X_2, & \quad [X_1,Y_1]=2\xi, & \quad [X_2,Y_1]=-2Y_2, \\[4 pt] 
[X_2,Y_2]=2(Y_1+\xi), & \quad [\xi, X_1]=-2 Y_1, & \quad [\xi,X_2]=-2Y_2 ,
\end{array}
$$
equipped with the left-invariant paracontact metric structure determined by the following conditions:
$$\varphi \xi=0, \quad \varphi X_i=X_i, \quad \varphi Y_i =-Y_i, \quad \eta(X_i)=\eta(Y_i)=0, \quad \eta(\xi)=1$$
and
$$g(X_i,X_j)=g(Y_i,Y_j)=0, \quad g(X_i,Y_j)=\delta_{ij},$$
for all $i,j=1,2$ (see \cite[Example 4.8]{CT}). As proved in \cite{CT}, this paracontact metric manifold is a paracontact $(\kappa,\mu)$-space, with $\kappa =-1$ and $\mu=2$. Hence, by Theorem~\ref{teo3.4}, it is $H$-paracontact. Moreover, $h^2=0$, although $hX_1 = -Y_1 \neq 0$. Therefore, this paracontact metric manifold is not $K$-paracontact, but by Theorem~\ref{6} its Reeb vector field is an infinitesimal harmonic transformation.

We also emphasize the fact that both in the above three-dimensional examples and in this five-dimensional example, with $\xi$ being an infinitesimal harmonic transformation, the tensor $h$ is two-step nilpotent.}
}\end{remark} 

The recent paper \cite{SS2} showed that the vector field $V$ determining a Riemannian Ricci soliton is necessarily an infinitesimal harmonic transformation. The same argument also applies to the semi-Riemannian case. A {\em Ricci soliton} is a semi-Riemannian manifold $(M,g)$, admitting a vector field $V$ and a real constant $\lambda$, such that
\begin{equation}\label{sol}
\varrho + \frac 12 L_V g = \lambda g.
\end{equation}
A Ricci soliton is said to be \emph{shrinking}, \emph{steady} or
\emph{expanding},  according to whether $\lambda>0$, $\lambda=0$ or $\lambda<0$,
respectively. An Einstein manifold, together with a Killing vector field, is a trivial solution of equation~\eqref{sol}. Ricci solitons have been intensively studied in recent years, particularly because of their relationship with the \emph{Ricci flow}. Examples and more details on Ricci solitons in semi-Riemannian settings may  be found in \cite{isr},\cite{CF} and references therein.

In analogy to the contact metric case, by a {\em paracontact (metric) Ricci soliton} we shall mean a paracontact metric manifold $(M,\varphi,\xi,\eta,g)$, such that equation~\eqref{sol} holds for $V=\xi$. In this case, we necessarily have
$$
Q\xi= \lambda \xi.
$$
In fact, if $V=\xi$, then equation \eqref{sol} yields
$$\begin{array}{rcl}
0 &=& \varrho (\xi,X) + \frac 12 \big( L_{\xi} g\big) (\xi,X) - \lambda g(\xi,X) \\[4 pt]
&=& g(Q\xi ,X)+\frac 12 g(\nabla_\xi\xi,X)+\frac 12 g(\nabla_X\xi , \xi)   -\lambda g(\xi,X) \\[4 pt]
&=& g(Q\xi ,X) -\lambda g(\xi,X),
\end{array}$$
for any vector field $X$, taking into account the fact that $\xi$ is unit and geodesic {(as it easily follows from \eqref{eq1})}.

On the other hand, if $(M,\varphi,\xi,\eta,g)$ is a paracontact Ricci soliton, then in particular $\xi$ is an infinitesimal harmonic transformation. Hence, Theorem~\ref{6} yields that $M$ is $H$-paracontact and $Q\xi  = -2n \xi$. Thus, $\lambda=  -2n$ and we have the following result.

\begin{theorem}\label{solit}
A paracontact Ricci soliton is $H$-paracontact, and is necessarily  shrinking.
\end{theorem}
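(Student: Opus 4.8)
The plan is to combine the two facts already established in the excerpt about paracontact Ricci solitons: that $V=\xi$ is forced to be an infinitesimal harmonic transformation, and the equivalences collected in Theorem~\ref{6}. Concretely, suppose $(M,\varphi,\xi,\eta,g)$ is a paracontact Ricci soliton, so that \eqref{sol} holds with $V=\xi$. The first step is to recall, as computed just above the statement, that pairing \eqref{sol} with an arbitrary $X$ and using $\nabla_\xi \xi = 0$ (which follows from $\nabla\xi = -\varphi+\varphi h$ and $h\xi=0$ in \eqref{eq1}) together with $g(\nabla_X\xi,\xi)=\tfrac12 X(g(\xi,\xi))=0$ yields $Q\xi=\lambda\xi$. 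In particular $\xi$ is a Ricci eigenvector, so by Theorem~\ref{main1} the manifold is $H$-paracontact; this already gives the first assertion.

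The second step pins down the value of $\lambda$. Since $(M,g,\xi,\lambda)$ is a Ricci soliton, the general fact (established in \cite{SS2} in the Riemannian case and extended to the semi-Riemannian case, as noted in the excerpt) that the soliton vector field is an infinitesimal harmonic transformation applies: $\xi$ is $1$-harmonic, i.e. $\bar\Delta\xi = Q\xi$. Now invoke Theorem~\ref{6}: for a paracontact metric manifold, $\xi$ being an infinitesimal harmonic transformation is equivalent to $Q\xi = -2n\xi$. Comparing this with $Q\xi=\lambda\xi$ from the first step forces $\lambda = -2n < 0$, so the soliton is shrinking. Alternatively, without quoting the \cite{SS2} result one can argue directly: by Theorem~\ref{main2}, $\bar\Delta\xi = -4n\xi - Q\xi$, and one can use the soliton equation to express $\bar\Delta\xi$ in terms of $Q\xi$; but the cleanest route is simply to cite that the soliton field is $1$-harmonic and feed this into Theorem~\ref{6}.

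The only genuinely delicate point is the sign convention: $\lambda>0$ means shrinking in the convention fixed by \eqref{sol}, and one must make sure that the curvature sign convention $R_{XY}=\nabla_{[X,Y]}-[\nabla_X,\nabla_Y]$ used throughout is consistent with $\varrho(\xi,\xi) = -2n + \operatorname{tr} h^2$ from \eqref{eq3} and hence with the eigenvalue $-2n$ appearing in Theorem~\ref{6}. Since both \eqref{eq3} and Theorem~\ref{6} are already proved in the same convention, this is bookkeeping rather than a real obstacle. Thus the proof is essentially a two-line deduction once Theorems~\ref{main1} and \ref{6} and the infinitesimal-harmonic-transformation property of soliton vector fields are in hand; I would write it as: \emph{a paracontact Ricci soliton has $\xi$ a soliton field, hence an infinitesimal harmonic transformation, hence by Theorem~\ref{6} $Q\xi=-2n\xi$, hence $M$ is $H$-paracontact (Theorem~\ref{main1}) and $\lambda=-2n>0$ in the shrinking convention of \eqref{sol}.}
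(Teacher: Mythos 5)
Your argument is exactly the paper's: the soliton vector field $\xi$ is an infinitesimal harmonic transformation (by the Stepanov--Shelepova result quoted just before the theorem), so Theorem~\ref{6} gives $Q\xi=-2n\xi$, whence $M$ is $H$-paracontact and, comparing with $Q\xi=\lambda\xi$ obtained from \eqref{sol}, one gets $\lambda=-2n$. The one slip is your closing claim that ``$\lambda=-2n>0$'', which is false on its face; note moreover that with the convention fixed by \eqref{sol} (shrinking meaning $\lambda>0$) the value $\lambda=-2n$ would actually make the soliton \emph{expanding}, an inconsistency already present in the paper's own statement, so your earlier sentence ``$\lambda=-2n<0$, so the soliton is shrinking'' reproduces the paper's conclusion but not its stated convention.
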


In the contact Riemannian case, $\xi$ is an infinitesimal {harmonic} transformation only when it is Killing. As a consequence, a contact Riemannian Ricci soliton is necessarily trivial, that is, an Einstein $K$-contact metric manifold \cite{Pe1}. The above Theorem~\ref{solit} specifies that pseudo-Riemannian { paracontact} Ricci solitons must be found among $H$-{paracontact} manifolds. { On the one hand,  this does not exclude the existence of  nontrivial {paracontact} Ricci solitons, on the other hand, we could not find examples of nontrivial paracontact Ricci solitons.} { This leads to state the following

\smallskip\noindent
{\bf Open Question:} {\em There exist nontrivial paracontact Ricci solitons?}}

\begin{center}
\textsc{Dipartimento di Matematica e Fisica "E. De Giorgi", \\ Universit\`{a} del Salento, Lecce, Italy.} \\
\textit{E-mail address}: giovanni.calvaruso@unisalento.it, domenico.perrone@unisalento.it 
\end{center}

\end{document}